\newcommand{\cP}{\mathcal{P}}
\newtheorem{thm}{Theorem}[section]
\newtheorem{lem}[thm]{Lemma}
\newtheorem{prop}[thm]{Proposition}
\newtheorem{cor}[thm]{Corollary}
\newtheorem{claim}[thm]{Claim}
\newtheorem{obs}[thm]{Observation}
\newtheorem{case}{Case}
\newtheorem{alg}{Algorithm}
\newtheorem{defn}[thm]{Definition}
\newtheorem*{conja}{Conjecture 1}
\numberwithin{equation}{section}
\begin{document}

\title{Grundy domination of forests and the strong product conjecture}

\author{Kayla Bell}
\address{Kayla Bell (\tt kbell28@student.clayton.edu)}

\author{Keith Driscoll}
\address{Keith Driscoll (\tt keithdriscoll@clayton.edu)}

\author{Elliot Krop}
\address{Elliot Krop (\tt elliotkrop@clayton.edu)}

\author{Kimber Wolff}
\address{Kimber Wolff (\tt kwolff1@student.clayton.edu)}
\address{Department of Mathematics, Clayton State University}

\date{\today}

\maketitle

\begin{abstract}
A maximum sequence $S$ of vertices in a graph $G$, so that every vertex in $S$ has a neighbor which is independent, or is itself independent, from all previous vertices in $S$, is called a Grundy dominating sequence. The Grundy domination number, $\gamma_{gr}(G)$, is the length of $S$. We show that for any forest $F$, $\gamma_{gr}(F)=|V(T)|-|\cP|$ where $\cP$ is a minimum partition of the non-isolate vertices of $F$ into caterpillars in which if two caterpillars of $\cP$ have an edge between them in $F$, then such an edge must be incident to a non-leaf vertex in at least one of the caterpillars. We use this result to show the strong product conjecture of B.~Bre\v{s}ar, Cs.~Bujt\'{a}s, T.~Gologranc, S.~Klav\v{z}ar, G.~Ko\v{s}mrlj, B.~Patk\'{o}s, Zs.~Tuza, and M.~Vizer, \emph{Dominating sequences in grid-like and toroidal graphs}, Electron. J. Combin. 23(4): P4.34 (2016), for all forests. Namely, we show that for any forest $G$ and graph $H$, $\gamma_{gr}(G \boxtimes H) = \gamma_{gr}(G) \gamma_{gr}(H)$. We also show that every connected graph $G$ has a spanning tree $T$ so that $\gamma_{gr}(G)\le \gamma_{gr}(T)$ and that every non-complete connected graph contains a Grundy dominating set $S$ so that the induced subgraph of $S$ contains no isolated vertices. 
\\[\baselineskip] 2020 Mathematics Subject
      Classification: 05C69, 05C76
\\[\baselineskip]
      Keywords: domination, strong product of graphs, Grundy domination 
\end {abstract}

\section{Introduction}

For any graph $G$, a sequence of vertices $S=(v_1,\dots, v_k)$ is called a \emph{legal sequence} if for every $i\in [k]$, $N[v_i]- \bigcup_{j=1}^{i-1}N[v_j]\neq \emptyset$. A longest legal sequence is called a \emph{Grundy dominating sequence} of $G$ and the size of such a sequence is called the \emph{Grundy domination number} and is denoted by $\gamma_{gr}(G)$.

Grundy domination was introduced in \cite{BGMRR} several years ago, inspired by the much studied game domination number and the domination game. Since then, a multitude of papers have been published on aspects of this function and its variants, e.g. \cite{BHR}, \cite{BBGKKPTV1}, \cite{BBGKKPTV2}, \cite{BBGKKMPTV}. 

A principal direction of inquiry in the original paper \cite{BGMRR} was to understand $\gamma_{gr}(T)$ for any tree $T$. To describe the results of that paper, call $ES(T)$ the \emph{end support vertices} of $T$ which are support vertices (non-leaves adjacent to a leaf) each of which are adjacent to at most one non-leaf vertex of $T$. For a lower bound, the authors produced an algorithm to find a legal sequence of $T$ of length at least $|V(T)|-|ES(T)|+1$. To find an upper bound, the authors defined an equivalence relation between end support vertices with equivalence classes $\widetilde{T}$. They then proved that the Grundy domination number does not exceed $|V(T)|-|ES(T)|+|\widetilde{T}|$. To summarize, the authors showed that for any tree $T$,

\begin{align}\label{treeineq}
|V(T)|-|ES(T)|+1\le \gamma_{gr}(T) \le |V(T)|-|ES(T)|+|\widetilde{T}|.
\end{align}

Another collection of natural questions for domination functions concern their behavior on graph products, inspired by the famous Vizing's conjecture \cite{V1}, which states that for any graphs $G$ and $H$, if $\square$ is the Cartesian product of graphs,

\begin{align}\label{V}
\gamma(G\square H)\ge \gamma(G)\gamma(H).
\end{align}

In \cite{BBGKKPTV1}, the authors investigated relations between the Grundy domination number of various graph products as they related to the Grundy domination numbers of the factor graphs. An outstanding conjecture concerned the strong product of graphs $\boxtimes$. For any graphs $G$ and $H$, the strong product $G\boxtimes H$ is the graph on the vertices in $(g,h)$ for every $g\in V(G)$ and $h\in V(H)$. Any pair of vertices $(g_1,h_1)$ and $(g_2,h_2)$ are adjacent if either 
\begin{enumerate}
\item $g_1=g_2$ and $h_1$ is adjacent to $h_2$ in $H$, or
\item $g_1$ is adjacent to $g_2$ in $G$ and $h_1 = h_2$, or
\item $g_1$ is adjacent to $g_2$ in $G$ and $h_1$ is adjacent to $h_2$ in $H$.
\end{enumerate}

The authors easily showed that for any graphs $G$ and $H$, 

\begin{align}\label{ineq1}
\gamma_{gr}(G \boxtimes H) \ge \gamma_{gr}(G)\gamma_{gr}(H)
\end{align}

and posed

\begin{conja}\label{conj}
For any graphs $G$ and $H$,
\[\gamma_{gr}(G \boxtimes H) = \gamma_{gr}(G)\gamma_{gr}(H).\]
\end{conja}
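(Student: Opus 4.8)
The plan is to reduce the identity to its one nontrivial half. Since the inequality $\gamma_{gr}(G \boxtimes H) \ge \gamma_{gr}(G)\gamma_{gr}(H)$ is already available (see \eqref{ineq1}), it suffices to establish the reverse bound $\gamma_{gr}(G \boxtimes H) \le \gamma_{gr}(G)\gamma_{gr}(H)$ for \emph{arbitrary} graphs $G$ and $H$. The structural fact I would build everything on is that closed neighborhoods factor in the strong product: by the definition of $\boxtimes$, one has $N[(g,h)] = N_G[g] \times N_H[h]$. Hence, if $A_G$ and $A_H$ denote the closed neighborhood matrices (symmetric $0/1$ matrices with all-ones diagonal, rows and columns indexed by vertices), then the closed neighborhood matrix of $G \boxtimes H$ is exactly the Kronecker product $A_G \otimes A_H$.

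Next I would pass to a matrix reformulation of $\gamma_{gr}$. A legal sequence, together with a choice of one footprint per step, selects rows (the footprints) and columns (the sequence vertices) of the closed neighborhood matrix whose induced square submatrix is, after reordering, upper triangular with unit diagonal: each diagonal entry records that the footprint lies in the chosen vertex's closed neighborhood, while the entries strictly below the diagonal vanish because that footprint was not dominated at any earlier step. Writing $\tau(M)$ for the largest order of such a unit-diagonal triangular submatrix of a $0/1$ matrix $M$, one gets $\gamma_{gr}(G) = \tau(A_G)$, and the conjecture becomes the clean statement $\tau(A_G \otimes A_H) \le \tau(A_G)\tau(A_H)$. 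The matching lower bound $\tau(A_G \otimes A_H) \ge \tau(A_G)\tau(A_H)$ merely reproduces \eqref{ineq1}, since the Kronecker product of two upper-triangular unit-diagonal submatrices is again upper-triangular unit-diagonal once rows and columns are ordered lexicographically.

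With this reformulation in hand I would look for a Kronecker-submultiplicative relaxation: a function $f$ on $0/1$ matrices satisfying $\tau(M) \le f(M)$, $f(M \otimes M') \le f(M)\,f(M')$, and $f(A_G) = \tau(A_G)$ for every closed neighborhood matrix $A_G$; these three properties together would force $\tau(A_G \otimes A_H) \le f(A_G)f(A_H) = \tau(A_G)\tau(A_H)$. The first candidate is the ordinary rank over a field, since a unit-diagonal triangular submatrix is nonsingular (so $\tau(M) \le \mathrm{rank}(M)$) and $\mathrm{rank}(M \otimes M') = \mathrm{rank}(M)\,\mathrm{rank}(M')$. Unfortunately rank overshoots: for $P_3$ one has $\gamma_{gr}(P_3)=2$, yet the closed neighborhood matrix $\bigl(\begin{smallmatrix}1&1&0\\1&1&1\\0&1&1\end{smallmatrix}\bigr)$ has determinant $-1$ and hence full rank $3$, so tightness on closed neighborhood matrices fails. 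The real work, then, is to replace rank by a finer invariant — a ``triangular'' or fractional rank that remains Kronecker-submultiplicative but collapses to $\gamma_{gr}$ on closed neighborhood matrices. As a fallback I would attempt a direct charging argument: given an optimal legal sequence of $G \boxtimes H$, split its steps according to the $G$-coordinate of their footprints and try to read a legal sequence of $H$ off each class while projecting the residue onto a legal sequence of $G$.

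The hard part will be exactly this tightness/charging step, and I expect it to require the symmetry and reflexivity of closed neighborhood matrices in an essential way. For general $0/1$ matrices $\tau$ need not be Kronecker-multiplicative, and (as the $P_3$ computation shows) $\tau$ can fall strictly below rank, so no purely linear-algebraic invariant of the obvious kind can close the gap without using graph structure. The source of the difficulty is a genuine coupling between the two coordinates: the requirement that a footprint $(a_i,b_i)$ be newly dominated only forces $a_i \notin N_G[g_j]$ \emph{or} $b_i \notin N_H[h_j]$ for each earlier index $j$, so the $H$-projection of a legal sequence of $G \boxtimes H$ need not itself be legal, and one cannot simply multiply the two factor bounds. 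The caterpillar-partition formula established earlier for forests circumvents this by exploiting a feature special to acyclic graphs and does not survive the introduction of cycles; thus the general case genuinely demands a new device — either a tight, Kronecker-compatible invariant for symmetric reflexive $0/1$ matrices, or an exchange argument that decouples the two coordinates of an optimal sequence.
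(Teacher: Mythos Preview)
The statement you are attempting is Conjecture~1, which the paper does \emph{not} prove in general; it remains open. What the paper establishes is only the special case in which one factor is a forest (Theorem~\ref{main} and its corollary), and that proof proceeds by structural induction on the forest factor using the caterpillar-partition formula of Theorem~\ref{forest}, Proposition~\ref{simplicial} on simplicial vertices, and the fiber-projection bound of Lemma~\ref{footint}. No matrix or Kronecker-product reformulation appears.

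Your proposal, by contrast, aims at the full conjecture and is not a proof but an outline with an explicitly acknowledged gap. The reformulation $\gamma_{gr}(G)=\tau(A_G)$ and $A_{G\boxtimes H}=A_G\otimes A_H$ is correct, and your observation that ordinary rank fails tightness already at $P_3$ is also correct. But after that you do not produce the promised ``finer invariant'' $f$ satisfying $\tau\le f$, $f(M\otimes M')\le f(M)f(M')$, and $f(A_G)=\tau(A_G)$ on closed-neighborhood matrices; you merely assert that one must be found. Likewise the fallback ``charging argument'' is described, not executed, and you yourself note the obstruction: the $H$-projection of a legal sequence of $G\boxtimes H$ need not be legal. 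So the proposal contains no step that actually bounds $\gamma_{gr}(G\boxtimes H)$ from above, and the conjecture is exactly as open after your write-up as before. If your target is only the forest case that the paper does prove, the route taken there is entirely different and does not pass through Kronecker products at all; you would need to engage with the caterpillar-partition structure and Lemmas~\ref{cc}--\ref{ccleaf} rather than with linear-algebraic relaxations.
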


Among other results, the authors showed this conjecture holds if $G$ is a caterpillar tree and $H$ is any graph.

In this paper, we find an exact formula for the Grundy domination number of forests. In Section $3$, we show that for any forest $F$, $\gamma_{gr}(F)=|V(F)|-|\cP|$ where $\cP$ is a minimum partition of the non-isolate vertices of $F$ into caterpillars in which if two caterpillars of $\cP$ have an edge between them in $F$, then such an edge must be incident to a non-leaf vertex in at least one of the caterpillars. In Section $4$ we use our result for forests to show Conjecture \ref{conj} true when $G$ is a forest. In section $5$ we show that every connected graph $G$ has a spanning tree with Grundy domination number at least as large as $\gamma_{gr}(G)$. We also show that every non-complete connected graph contains a Grundy dominating set $S$ so that the induced subgraph of $S$ contains no isolated vertices.

\section{More definitions and known results}

We follow established notation from \cite{BBGKKPTV1} which can also be found in other papers on the subject. For any legal sequence $S=(v_1,\dots, v_k)$ we call the set of vertices composed of the vertices from the sequence a \emph{legal set} and write $\widehat{S}=\{v_1,\dots, v_k\}$. For any $i\in [k]$, we say that $v_i$ \emph{footprints} the vertices $N[v_i]- \bigcup_{j=1}^{i-1}N[v_j]$ and that the vertices of $N[v_i]- \bigcup_{j=1}^{i-1}N[v_j]$ are \emph{footprinted} by $v_i$ or the \emph{footprint} of $v_i$. Also, we say that $v_i$ is the \emph{footprinter} of $N[v_i]- \bigcup_{j=1}^{i-1}N[v_j]$.

\medskip

For notational convenience, when producing a legal sequence for a graph $G$, we identify \emph{labels} on vertices of $G$ with the indices of the legal sequence. That is, a label on a vertex will indicate the sequential position of that vertex.

\medskip

Let $G$ and $H$ be arbitrary graphs. For any $h\in V(H)$, define the \emph{$G$-fiber} of $G\boxtimes H$ with respect to $h$ as the induced subgraph on $\{(g,h)\in G\boxtimes H: g\in V(G)\}$ and denote it by $G^h$. Similarly, for any $g\in V(G)$, define the \emph{$H$-fiber} of $G\boxtimes H$ with respect to $g$ as the induced subgraph on $\{(g,h)\in G\boxtimes H: h\in V(H)\}$ and denote it by $H^g$.

\medskip

A tree $C$ is called a \emph{caterpillar} if for a maximum path $P$ of $C$, every vertex is of distance at most $1$ from $P$. A consequence of inequalities \eqref{treeineq} from \cite{BGMRR}, is that if a tree $C$ is a caterpillar, then $\gamma_{gr}(C)=|V(G)|-1$. 

\medskip

A vertex is called \emph{simplicial} if its neighbors form a clique. The following result was shown in \cite{BBGKKPTV1} but we state the proof here for completeness.

\begin{prop}{\cite{BBGKKPTV1}}\label{simplicial}
For any graphs $G$ and $H$, if $v$ is a simplicial vertex of $G$, then
\[\gamma_{gr}(G \boxtimes H) \le \gamma_{gr}(H) + \gamma_{gr}((G-v)\boxtimes H)\]
\end{prop}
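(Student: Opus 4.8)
The goal is to show that if $v$ is a simplicial vertex of $G$, then $\gamma_{gr}(G \boxtimes H) \le \gamma_{gr}(H) + \gamma_{gr}((G-v)\boxtimes H)$. The natural approach is to take a Grundy dominating sequence $S$ of $G \boxtimes H$ and to "project" it onto the two pieces: the $H$-fiber $H^v$ over $v$, and the subgraph $(G-v)\boxtimes H$.

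Key observations to set up. Since $v$ is simplicial in $G$, its closed neighborhood $N_G[v]$ is a clique. In the strong product, this means that for any fixed $h \in V(H)$, the vertex $(v,h)$ together with all $(g,h')$ for $g \in N_G[v]$ and $h' \in N_H[h]$ forms a clique (condition (3) in the definition of strong product, combined with $N_G[v]$ being a clique). More importantly, $N_{G\boxtimes H}[(v,h)]$ is contained in $N_{G\boxtimes H}[(u,h)]$ whenever $u \in N_G[v]$... actually the cleanest fact is: the $H$-fiber $H^v$ is such that each $(v,h)$ has all its $G\boxtimes H$-neighbors lying in fibers $H^u$ with $u\in N_G[v]$.

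The main construction. Given a Grundy dominating sequence $S = (w_1, \dots, w_k)$ of $G\boxtimes H$, split it: let $S_1$ be the subsequence of vertices lying in $H^v$, and let $S_2$ be the subsequence of vertices lying in $(G-v)\boxtimes H$ (i.e. with first coordinate $\ne v$). These two subsequences partition $\widehat{S}$, so $k = |S_1| + |S_2|$. I would argue that $S_1$, viewed inside $H^v \cong H$, is a legal sequence of $H$ — when $w_i = (v,h_i) \in S_1$ footprints some vertex in $G\boxtimes H$, I want to deduce it footprints a vertex within $H^v$ with respect to the earlier elements of $S_1$. The point is that a vertex $(v, h')$ in $H^v$ can only be dominated (have its closed neighborhood hit) from within fibers $H^u$, $u \in N_G[v]$; but the footprint of $w_i$ is a vertex whose closed neighborhood was not previously hit, and one shows that if $w_i=(v,h_i)$ footprints anything at all in $G\boxtimes H$, then it footprints the vertex $(v,h_i)$ or some $(v,h')$ relative to $S_1$ alone — this is where simpliciality does the work. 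Hence $|S_1| \le \gamma_{gr}(H)$. Similarly $S_2$ is a legal sequence of $(G-v)\boxtimes H$: each $w_i \in S_2$ has first coordinate $g \ne v$, and its footprint in $G\boxtimes H$ — if it lies in $H^v$, one reroutes it to a footprint in $(G-v)\boxtimes H$ using that $g$ is adjacent in $G$ to its $v$-neighbors via simpliciality, so some nearby vertex outside $H^v$ is also newly footprinted; if the footprint already lies outside $H^v$ there is nothing to do. Thus $|S_2| \le \gamma_{gr}((G-v)\boxtimes H)$, and adding the two inequalities gives the result.

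The main obstacle. The delicate step is verifying that the subsequence $S_1$ restricted to $H^v$ remains legal — i.e., that whenever $(v,h_i)$ contributes a genuinely new footprint in the big graph, it also contributes one relative to only the previously-seen elements of $S_1$. The subtlety is that the footprinted vertex of $(v,h_i)$ in $G\boxtimes H$ might a priori lie outside $H^v$, in some fiber $H^u$ with $u \in N_G[v]$. Here I would use that $N_G[u] \supseteq N_G[v]$ is false in general, so instead I would argue directly: if $(v,h_i)$ footprints $(u, h')$ with $u \ne v$, then by the adjacency rules $h' \in N_H[h_i]$, and I claim $(v, h')$ was not previously footprinted either — because any earlier closed neighborhood containing $(v,h')$ would, by simpliciality of $v$ (every neighbor of $v$ in $G$ is adjacent to $u$), also contain $(u,h')$, contradicting that $(u,h')$ is newly footprinted by $w_i$. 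This swap — replacing an out-of-fiber footprint by an in-fiber one — is the crux, and it is exactly dual to the reasoning for $S_2$. Once this lemma-within-the-proof is nailed down, the rest is bookkeeping.
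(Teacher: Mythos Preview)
Your treatment of $S_1$ is essentially the paper's own argument and is correct: projecting $S_1$ to $H$ gives a legal sequence, so $|S_1|\le\gamma_{gr}(H)$.

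The gap is in your treatment of $S_2$. It is \emph{not} true that for an arbitrary Grundy dominating sequence $S$ the subsequence $S_2$ is legal in $(G-v)\boxtimes H$. Take $G=P_3$ on vertices $a,b,c$ (edges $ab,bc$), let $v=a$ (a simplicial leaf), and let $H=K_1$. The sequence $S=(c,b)$ is a Grundy dominating sequence of $G$: $c$ footprints $\{b,c\}$ and $b$ footprints $\{a\}$. Here $S_2=(c,b)$, but in $(G-v)=P_2$ the vertex $b$ has no new footprint after $c$, so $S_2$ is not legal there. Your proposed ``reroute'' would replace the footprint $(v,h')=(a,\cdot)$ by $(g,h')=(b,\cdot)$, but $(b,\cdot)$ is already dominated by the earlier $(c,\cdot)$. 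The underlying reason is that simpliciality of $v$ gives $N_G[v]\subseteq N_G[g]$ for $g\in N_G(v)$, which is exactly the \emph{wrong} containment for your reroute: you would need $N_G[g]\subseteq N_G[v]$ to conclude that anything dominating $(g,h')$ also dominates $(v,h')$.

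The paper closes this gap with an extremal choice: among all Grundy dominating sequences of $G\boxtimes H$, pick $D$ with the \emph{maximum} number of entries in $H^v$. If some $(u,h_1)\in\widehat{D}_2$ had its only footprint $(v,h_2)$ inside $H^v$, then because $N_G[v]\subseteq N_G[u]$ one has $N_{G\boxtimes H}[(v,h_1)]\subseteq N_{G\boxtimes H}[(u,h_1)]$, so replacing $(u,h_1)$ by $(v,h_1)$ keeps the sequence legal (the new entry still footprints $(v,h_2)$, and later entries can only gain footprint) while increasing $|\widehat{D}\cap H^v|$, a contradiction. Hence no element of $D_2$ footprints into $H^v$, and $D_2$ is genuinely legal in $(G-v)\boxtimes H$. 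Your argument can be repaired by inserting exactly this extremal step.
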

\begin{proof}
Among all Grundy dominating sequences of $G\boxtimes H$, let $D$ be one that has the maximum number of vertices from $H^v$. Let $D_1$ be the subsequence of $D$ consisting of the vertices in $\widehat{D}\cap H^v$  and $D_2=D- D_1$. Notice that by projecting $\widehat{D}_1$ onto $H$ we obtain vertices that form a legal sequence of $H$, taken in the same order as $D_1$. Call this projected sequence $p_H(D_1)$. To see this, assume that a vertex $(v,y)\in \widehat{D}_1$ footprints a vertex $(v',y')\in G\boxtimes H$ and notice that in $H$, $y$ must footprint $y'$ with respect to the sequence $p_H(D_1)$. Thus, $|\widehat{D}_1|\le \gamma_{gr}(H)$.

Notice that if $u$ is a neighbor of $v$ in $G$ and for some $h_1$ in $H$, $(u,h_1)$ footprints $(v,h_2)$ for some $h_2$ in $H$, then since $v$ is simplicial, $(u,h_1)$ may be replaced by $(v,h_1)$ in $D$ to produce a legal sequence with more vertices from $H^v$ than $D$. This contradicts the maximum choice of $D$. Hence, no vertex of $\widehat{D}_2$ footprints a vertex in $H^v$. This means that $D_2$ is a legal sequence of $(G-v)\boxtimes H$. Thus, we have the desired inequality
\[\gamma_{gr}(G\boxtimes H)=|\widehat{D}_1|+|\widehat{D}_2|\le \gamma_{gr}(H)+\gamma_{gr}((G-v)\boxtimes H).\]
\end{proof}

A vertex $u$ is called a \emph{twin vetex} if there exists another vertex $v$ so that $N[u]=N[v]$. The next three results were shown in \cite{BGK}.

\begin{thm}{\cite{BGK}}\label{edgedel}
If $G$ is a graph and $e\in E(G)$, then 
\[\gamma_{gr}(G)-1\le \gamma_{gr}(G-e)\le \gamma_{gr}(G)+1.\]
Moreover, there exist graphs $G$ such that all values of $\gamma_{gr}(G-e)$ between $\gamma_{gr}(G)-1$ and $\gamma_{gr}(G)+1$ are realized for different edges $e\in E(G)$.
\end{thm}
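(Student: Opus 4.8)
The plan is to prove the two inequalities separately by a common bookkeeping device, and then to exhibit one graph on which all three values of $\gamma_{gr}(G-e)$ occur. Write $e=xy$ and note that the only closed neighbourhoods that differ between $G$ and $G-e$ are those of $x$ and $y$: we have $N_{G-e}[x]=N_G[x]\setminus\{y\}$, $N_{G-e}[y]=N_G[y]\setminus\{x\}$, and $N_{G-e}[w]=N_G[w]$ for every other vertex $w$. The organizing principle is that deleting a vertex from a legal sequence of any graph only shrinks the prefix unions $\bigcup_{j<i}N[v_j]$ and hence only enlarges the footprints of the surviving vertices; so each inequality follows once we show that a Grundy dominating sequence of one graph becomes a legal sequence of the other after deleting at most one vertex.

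For $\gamma_{gr}(G-e)\ge\gamma_{gr}(G)-1$, take a Grundy dominating sequence $S=(v_1,\dots,v_k)$ of $G$ and read it in $G-e$. If $v_i\notin\{x,y\}$ then $N_{G-e}[v_i]=N_G[v_i]$ while the prefix union can only shrink, so the footprint of $v_i$ in $G-e$ with respect to $S$ contains its footprint in $G$ and is non-empty; likewise the $G-e$-footprint of $x$ contains its $G$-footprint with $y$ removed, and similarly for $y$. Hence the only vertices that can lose their whole footprint in $G-e$ are $x$, and then only if its footprint in $G$ equals $\{y\}$, and $y$, and then only if its footprint in $G$ equals $\{x\}$. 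These cannot happen together: if $x=v_a$ has footprint $\{y\}$ and $y=v_b$ has footprint $\{x\}$ with, say, $a<b$, then $x\in N_G[v_a]$ already puts $x$ in the prefix union before step $b$, so $x$ cannot be footprinted by $v_b$, a contradiction. Deleting the at most one offending vertex yields a legal sequence of $G-e$ of length at least $k-1$.

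For $\gamma_{gr}(G-e)\le\gamma_{gr}(G)+1$, take a Grundy dominating sequence $S=(v_1,\dots,v_m)$ of $G-e$ and read it in $G$. Now more footprints may shrink, since $x$ and $y$ can be covered earlier in $G$; the key point is that for every $i$ one still has \[ N_G[v_i]-\bigcup_{j<i}N_G[v_j]\ \supseteq\ \Bigl(N_{G-e}[v_i]-\bigcup_{j<i}N_{G-e}[v_j]\Bigr)\setminus\{x,y\}, \] because a footprinted vertex $z\neq x,y$ of $v_i$ in $G-e$ lies in $N_G[v_i]$ and could only have entered the $G$-prefix-union through $x$ or $y$, which it is not. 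In the legal sequence $S$ each of $x$ and $y$ has a unique footprinter, so the only vertices that can lose their whole footprint in $G$ are these (at most two) footprinters, and only if their $G-e$-footprint is contained in $\{x,y\}$. Suppose both were lost; by symmetry assume $x$ is first covered in $G-e$ no later than $y$, and let $v_a$ footprint $x$ and $v_b$ footprint $y$, so $a\le b$. Since $y$ is covered in $G-e$ at whatever step it appears in $S$ while $x$ is not covered before step $a$, the vertex $y$ cannot appear in $S$ before step $a$; hence $x$ is still first covered in $G$ at step $a$, so $x$ stays in the footprint of $v_a$ in $G$, contradicting that $v_a$ was lost. Thus at most one footprinter is lost, and deleting it yields a legal sequence of $G$ of length at least $m-1$. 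I expect this direction to be the main obstacle: passing from $G-e$ to $G$ can disturb the footprints of many vertices at once (everything newly covered at the step where $x$ or $y$ first appears), so the real content is the displayed containment, which confines the damage to exactly the two footprinters.

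Finally, for the ``moreover'' clause it suffices to exhibit one graph realizing all three outcomes; recall that $\gamma_{gr}$ is additive over connected components and that $\gamma_{gr}(K_n)=1$ for all $n\ge 1$. Let $G$ have vertex set $\{a,b,c,d,f\}$, a triangle on $\{a,b,c\}$, and a pendant path attached at $a$ through $d$ and then $f$, so that $E(G)=\{ab,ac,bc,ad,df\}$. Then $\gamma_{gr}(G)=3$: the sequence $(f,d,b)$ is legal (footprints $\{d,f\}$, $\{a\}$, $\{b,c\}$), and a short case check shows no legal sequence has length $4$. Deleting the bridge $ad$ disconnects $G$ into $K_3$ and $K_2$, so $\gamma_{gr}(G-ad)=1+1=2=\gamma_{gr}(G)-1$. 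Deleting the pendant edge $df$ leaves a triangle with one pendant vertex (which has Grundy domination number $2$) together with an isolated vertex $f$, so $\gamma_{gr}(G-df)=2+1=3=\gamma_{gr}(G)$. Deleting the triangle edge $ab$ turns $G$ into a path on five vertices, which is a caterpillar, so $\gamma_{gr}(G-ab)=5-1=4=\gamma_{gr}(G)+1$ by the caterpillar identity recorded earlier. Thus all three values occur, for three different edges of $G$.
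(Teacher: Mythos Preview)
The paper does not actually prove this theorem: it is quoted verbatim from \cite{BGK} and no argument is given here, so there is no ``paper's own proof'' to compare against. That said, your proof is correct and self-contained, and it is in the spirit of the standard argument. A few remarks:

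Both inequalities are handled cleanly. For $\gamma_{gr}(G-e)\ge\gamma_{gr}(G)-1$ your observation that at most one of $x,y$ can have its footprint collapse (since if $x$ footprints only $y$ and precedes $y$, then $x$ itself is already in the prefix union and cannot later be footprinted by $y$) is exactly right. For the reverse direction, the displayed containment is the key step, and your argument that at most one of the two footprinters $v_a,v_b$ can actually lose its footprint in $G$ is correct: the crucial point is that if $a\le b$ then $y$ cannot occur in $S$ before position $a$ (otherwise $y$ would cover itself before step $b$), and since $y$ is the only vertex whose $G$-neighbourhood newly contains $x$, the vertex $x$ remains uncovered in $G$ until step $a$ and so survives in the footprint of $v_a$. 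The phrasing ``since $y$ is covered in $G-e$ at whatever step it appears in $S$ while $x$ is not covered before step $a$'' is slightly opaque; the clean statement is simply that if $y=v_c$ then $y$ covers itself at step $c$, whence $c\ge b\ge a$. Note also that your argument already covers the degenerate case $v_a=v_b$ (one vertex footprints both $x$ and $y$): then the same reasoning shows $x$ survives in its $G$-footprint.

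Your example for the ``moreover'' clause is correct and pleasantly minimal. The verifications all check: $b$ and $c$ are closed twins in $G$, which caps $\gamma_{gr}(G)$ at $3$; $G-ad=K_3\cup K_2$ has $\gamma_{gr}=2$; $G-df$ is a paw plus an isolate with $\gamma_{gr}=3$; and $G-ab\cong P_5$ has $\gamma_{gr}=4$.
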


\begin{thm}{\cite{BGK}}\label{vertdel}
If $G$ is a graph and $v\in V(G)$, then 
\[\gamma_{gr}(G)-2\le \gamma_{gr}(G-v)\le \gamma_{gr}(G).\]
Moreover, there exist graphs $G$ such that all values of $\gamma_{gr}(G-v)$ between $\gamma_{gr}(G)-2$ and $\gamma_{gr}(G)$ are realized for different vertices $v\in V(G)$.
\end{thm}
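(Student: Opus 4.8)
The plan is to establish the two inequalities by transporting legal sequences between $G$ and $G-v$, and then to settle the sharpness claim with one explicit small graph. For the upper bound $\gamma_{gr}(G-v)\le\gamma_{gr}(G)$, I would take a Grundy dominating sequence $S=(v_1,\dots,v_m)$ of $G-v$ and observe that $S$ is already a legal sequence of $G$: if $v_i$ footprints a vertex $w$ in $G-v$ with respect to $S$, then $w\ne v$, and since deleting $v$ does not change adjacencies among the remaining vertices we get $w\in N_G[v_i]$ while $w\notin N_G[v_j]$ for all $j<i$ (the latter because $w\ne v$ and $w\notin N_{G-v}[v_j]$); hence $v_i$ still footprints $w$ in $G$, so $S$ is legal in $G$ and $\gamma_{gr}(G-v)=|S|\le\gamma_{gr}(G)$.

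For the lower bound $\gamma_{gr}(G-v)\ge\gamma_{gr}(G)-2$, I would begin with a Grundy dominating sequence $S=(v_1,\dots,v_k)$ of $G$. Since $S$ is maximal, every vertex of $G$ is footprinted and the footprints of distinct terms of $S$ partition $V(G)$; in particular $v$ is footprinted by a unique term $v_{i_0}$. Let $S'$ be obtained from $S$ by deleting the term $v$, if $v$ occurs in $S$, and also deleting $v_{i_0}$ in the single case that the footprint of $v_{i_0}$ in $G$ is exactly $\{v\}$; then $|\widehat{S'}|\ge k-2$. I claim $S'$ is legal in $G-v$: for a surviving term $v_i$, its footprint $F_i$ in $G$ is nonempty and is not $\{v\}$ (the only term that could footprint exactly $\{v\}$ is $v_{i_0}$, which we retained only when it does not), so we may choose $w\in F_i\setminus\{v\}$, and then $w$ is footprinted by $v_i$ in $G-v$ with respect to $S'$ by the reasoning used above, since passing from $S$ to $S'$ only removes terms and hence only shrinks the set of predecessors of $v_i$. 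Thus $\gamma_{gr}(G-v)\ge|\widehat{S'}|\ge\gamma_{gr}(G)-2$. The one delicate point in the whole argument is exactly this last step: deleting $v$ forces deleting \emph{at most one} further term, with no cascade, precisely because the footprints partition $V(G)$ (so at most one term can footprint only $v$) and because removing terms from a legal sequence never destroys the footprint of a surviving term.

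For the ``moreover'' statement I would exhibit the disjoint union $G=P_5\cup K_2$ with $P_5=a_1a_2a_3a_4a_5$, using additivity of $\gamma_{gr}$ over connected components together with $\gamma_{gr}(P_n)=n-1$ for $n\ge2$ (a path is a caterpillar) and $\gamma_{gr}(K_1)=1$. Then $\gamma_{gr}(G)=5$, while deleting a vertex of $K_2$ leaves $P_5\cup K_1$ with $\gamma_{gr}=5$, deleting the endpoint $a_1$ leaves $P_4\cup K_2$ with $\gamma_{gr}=4$, and deleting the central vertex $a_3$ leaves $P_2\cup P_2\cup K_2$ with $\gamma_{gr}=3$; hence $\gamma_{gr}(G)$, $\gamma_{gr}(G)-1$, and $\gamma_{gr}(G)-2$ are all realized by deletions of different vertices.
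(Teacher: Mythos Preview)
The paper does not give its own proof of this theorem: it is quoted from \cite{BGK} and stated without argument, so there is nothing to compare your proposal against. That said, your argument is correct. The upper bound is immediate as you say, and for the lower bound the key observation---that the footprints of a Grundy dominating sequence partition $V(G)$, so at most one term $v_{i_0}$ can have $v$ as its sole footprint---cleanly limits the damage to two deleted terms, with passage from $S$ to the subsequence $S'$ only enlarging surviving footprints. Your sharpness example $P_5\cup K_2$ checks out using additivity over components and $\gamma_{gr}(P_n)=n-1$.
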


\begin{prop}{\cite{BGK}}\label{simp}
Let $G$ be a graph and $u\in V(G)$.
\begin{enumerate}
\item If $u$ is a simplicial vertex, then $\gamma_{gr}(G-u)\ge \gamma_{gr}(G)-1$.
\item If $u$ is a twin vertex, then $\gamma_{gr}(G-u)=\gamma_{gr}(G)$.
\end{enumerate}
\end{prop}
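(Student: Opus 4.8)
Throughout, the plan is to begin with a Grundy dominating sequence $S=(v_1,\dots,v_k)$ of $G$, where $k=\gamma_{gr}(G)$, and to carve out of it a long legal sequence of $G-u$. Let $S^*$ be the subsequence of $S$ obtained by deleting the single term $u$, if $u$ occurs in $S$. The key bookkeeping observation is that for every $v_j$ occurring in $S^*$, the footprint of $v_j$ in $G-u$ taken with respect to $S^*$ contains the footprint of $v_j$ in $G$ taken with respect to $S$, minus the vertex $u$. Hence the only terms of $S^*$ that can footprint nothing in $G-u$ are those whose $G$-footprint was exactly $\{u\}$, and such a term must be the (unique) footprinter of $u$ in $S$. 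We will also use that a longest legal sequence is dominating, so $u$ is footprinted by $S$ (or occurs in it) and thus has a footprinter.

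For (1), assume $u$ is simplicial, so $N[u]$ is a clique $K$. If $u$ is its own footprinter in $S$, then $S^*$ already loses no term in $G-u$ (no other term can have $G$-footprint $\{u\}$, as footprinters are unique), giving a legal sequence of $G-u$ of length $k-1$. If the footprinter of $u$ is $w\ne u$ and $u\notin\widehat S$, then $S^*=S$, at most $w$ footprints nothing in $G-u$, and deleting $w$ as well (which only enlarges the footprints of later terms) again yields length at least $k-1$. The remaining case, $u=v_i\in\widehat S$ with footprinter $w=v_m$ and $m<i$, is the crux: here one must show $w$ cannot footprint exactly $\{u\}$. Since $w$ footprints $u$ we have $w\in K$, and as $K$ is a clique this forces $K\subseteq N[w]$; so if $w$ footprinted only $u$ we would get $K\setminus\{u\}\subseteq N[w]\setminus\{u\}\subseteq\bigcup_{l<m}N[v_l]\subseteq\bigcup_{l<i}N[v_l]$, while also $u\in N[v_m]\subseteq\bigcup_{l<i}N[v_l]$, and then the footprint of $v_i=u$ would be $K\setminus\bigcup_{l<i}N[v_l]=\emptyset$, contradicting legality of $S$. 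So no term of $S^*$ dies in $G-u$ and $|\widehat{S^*}|=k-1$. In all cases $\gamma_{gr}(G-u)\ge\gamma_{gr}(G)-1$.

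For (2), assume $N[u]=N[v]$ with $v\ne u$. The structural fact doing the work is that $u$ and $v$ are footprinted at the same moment in any legal sequence, because $u\in N[v_l]\iff v_l\in N[u]=N[v]\iff v\in N[v_l]$ for every term $v_l$. Consequently the footprinter of $u$ in any legal sequence also footprints $v$, so its footprint contains the two distinct vertices $u$ and $v$ and in particular is never $\{u\}$; by the bookkeeping observation, deleting $u$ from a legal sequence never destroys a term when we pass to $G-u$. Also, at most one of $u,v$ can appear in a legal sequence, since whichever comes first footprints all of $N[u]=N[v]$, leaving nothing for the other. So if $u$ occurs in $S$ we replace that occurrence by $v$; because $N[u]=N[v]$ this alters no other term's footprint, producing a legal sequence $S'$ of $G$ of length $k$ not containing $u$. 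In either case we obtain a legal sequence of $G-u$ of length $k$, whence $\gamma_{gr}(G-u)\ge\gamma_{gr}(G)$; combined with the general bound $\gamma_{gr}(G-u)\le\gamma_{gr}(G)$ of Theorem~\ref{vertdel}, this gives equality.

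The step I expect to be the main obstacle is the delicate case of part (1): one must genuinely exploit the clique structure of $N[u]$ to rule out the situation in which $u$ lies in the optimal sequence while its footprinter becomes useless after $u$ is removed. For a general (non-simplicial) vertex that configuration does occur, and only the weaker estimate $\gamma_{gr}(G-u)\ge\gamma_{gr}(G)-2$ of Theorem~\ref{vertdel} remains available.
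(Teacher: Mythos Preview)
The paper does not prove Proposition~\ref{simp}; it is quoted from \cite{BGK} without argument, so there is no in-paper proof to compare against. That said, your proof is correct and self-contained. The bookkeeping observation is sound: passing from $S$ to $S^*$ in $G-u$ one has, for every surviving term $v_j$, that its footprint in $G-u$ with respect to $S^*$ equals its footprint in $G$ with respect to $S^*$ minus $\{u\}$, which in turn contains its footprint in $G$ with respect to $S$ minus $\{u\}$; hence only the unique footprinter of $u$ is at risk. Your case split in part~(1) is exhaustive, and the clique argument in the ``crux'' case is exactly right: if $w\in N[u]$ footprints only $u$, then $N[u]\subseteq N[w]$ forces all of $N[u]$ to be covered before step $i$, killing the footprint of $v_i=u$. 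In part~(2) the twin symmetry $u\in N[v_l]\iff v\in N[v_l]$ indeed guarantees that the footprinter of $u$ also footprints $v$, so no term ever has footprint $\{u\}$; replacing $u$ by $v$ when necessary and invoking Theorem~\ref{vertdel} completes the equality cleanly.
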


\section{Grundy domination of forests}
Let $C$ be a caterpillar with more than one vertex. Choose a path $P=\{v_1,\dots, v_k\}$ of maximum length with leaves $v_1,v_k$ and the rest of the vertices of $P$ non-leaf vertices. For $i\in \{2,\dots, k-1\}$, let $L_i$ indicate the set of leaf neighbors of $v_i$. We now define a labeling on $C$ which we will show produces a legal sequence of $C$.

\medskip

\begin{alg}[Caterpillar Labeling]

Starting with $v_2$, label the vertices in $L_2$ by consecutive integers starting from $1$. If $k=2$, then stop. Otherwise, label $v_2$ by the next consecutive integer. Continue labeling $L_3$ by the next consecutive integers. If $k=3$, then stop. Otherwise, label $v_3$ by the next consecutive integer. Repeat these steps for all non-leaf vertices in order along $P$, ending at $v_{k-1}$ but in the last step label all but one vertex of $L_{k-1}$ and then label $v_{k-1}$.

\end{alg}

\begin{prop}\label{cat}
For any caterpillar $C$, the Caterpillar Labeling produces a length $|C|-1$ legal sequence of $C$.
\end{prop}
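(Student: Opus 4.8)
The plan is to verify, by inspecting the algorithm, the two parts of the claim: that the labels used are exactly $1,\dots,|C|-1$ (so the resulting sequence has length $|C|-1$), and that the labeled vertices, listed in increasing order of label, form a legal sequence. Both hinge on the structure forced by $P$ being a maximum path of the caterpillar $C$. Its endpoints $v_1,v_k$ are leaves of $C$, and every vertex off $P$ has degree $1$: such a vertex, together with all of its neighbors, lies within distance $1$ of the tree-path $P$, so a second neighbor would close a cycle. Hence each vertex off $P$ is a leaf, and since $P$ is longest it is attached to an internal vertex $v_i$, so it lies in exactly one $L_i$; in particular $v_1\in L_2$ and $v_k\in L_{k-1}$, and $V(C)$ is the disjoint union of $\{v_2,\dots,v_{k-1}\}$ with $L_2,\dots,L_{k-1}$. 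The Caterpillar Labeling puts a label on each $v_i$ with $2\le i\le k-1$ and on every vertex of every $L_i$ with the single exception of one vertex of $L_{k-1}$, and it uses consecutive integers starting at $1$; so the labels are precisely $1,\dots,|C|-1$.

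To see legality I would exhibit, for each labeled vertex $x$, a vertex in $N[x]$ not footprinted by any earlier vertex of the sequence. If $x$ is a leaf $\ell\in L_i$, the witness is $\ell$ itself: its only neighbor $v_i$ is labeled strictly after the whole block $L_i$, so when $\ell$ is reached neither $\ell$ nor $v_i$ has been labeled, and $\ell$ is unfootprinted. If $x=v_i$ with $2\le i\le k-2$, the witness is $v_{i+1}$: we have $v_{i+1}\in N[v_i]$, and the only vertices whose closed neighborhoods contain $v_{i+1}$ are $v_i,v_{i+1},v_{i+2}$ and the leaves of $L_{i+1}$, all of which are processed no earlier than $v_i$; hence $v_{i+1}$ is still unfootprinted just before the step for $v_i$. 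Finally, if $x=v_{k-1}$, the witness is the unique vertex of $L_{k-1}$ left unlabeled: since $v_k\in L_{k-1}$ we have $N[v_{k-1}]=\{v_{k-2}\}\cup L_{k-1}\cup\{v_{k-1}\}$, and by the previous steps $v_{k-2}$ and $v_{k-1}$ are already footprinted and the labeled vertices of $L_{k-1}$ footprinted themselves, whereas the unlabeled vertex of $L_{k-1}$ still has no labeled neighbor. So every labeled vertex footprints a new vertex, the sequence is legal, and its length equals the number of labeled vertices, namely $|C|-1$.

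The remaining work is only in the degenerate lengths of $P$, and I expect that to be the one delicate point. If $k=2$ then $C=K_2$ and the sequence is $(v_1)$, legal of length $1=|C|-1$. If $k=3$ the ``first'' and ``last'' steps of the algorithm coincide, so $L_2=L_{k-1}$ and the clause ``label all but one vertex of $L_{k-1}$'' is what governs the $L_2$ block; the first labeled leaf then footprints itself and $v_2$, the remaining labeled leaves footprint themselves, and $v_2$ footprints the one unlabeled leaf. The substantive point — as opposed to bookkeeping — is the very last step: because $v_1\in L_2$ and $v_k\in L_{k-1}$, the vertex $v_{k-1}$ has no neighbor outside $\{v_{k-2},v_{k-1}\}\cup L_{k-1}$, so it would footprint nothing new unless one vertex of $L_{k-1}$ is deliberately left unlabeled, which is exactly what the ``all but one'' clause of the algorithm arranges.
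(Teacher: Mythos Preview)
Your proof is correct and follows exactly the same approach as the paper's own argument: each labeled leaf footprints itself, each $v_i$ with $2\le i\le k-2$ footprints $v_{i+1}$, and $v_{k-1}$ footprints the one unlabeled leaf of $L_{k-1}$. You simply give more detail than the paper (which dispatches the proposition in three sentences), including a careful count of the labels and explicit handling of the small-$k$ cases.
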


\begin{proof}
Notice that every leaf vertex chosen in the Caterpillar Labeling footprints itself. For $i\in \{2,\dots, k-2\},\,v_i$ footprints $v_{i+1}$. Finally, $v_{k-1}$ footprints the unlabeled leaf in $L_{k-1}$.
\end{proof}

\medskip

For any forest $F$, a \emph{minimum caterpillar partition} of $F$, $\cP$, is a partition of the non-isolate vertices of $F$ into sets $C_1,\dots, C_{\ell}$ so that
\begin{enumerate}
\item For any $i$, the induced subgraph on $C_i$ is a caterpillar.
\item If for some $i$ and $j$, there is an edge between a vertex of $C_i$ and $C_j$ in $T$, then that edge must be adjacent to a non-leaf vertex of the induced subgraph on $C_i$ or $C_j$.
\item $\cP$ is chosen to have the minimum number of caterpillars.
\end{enumerate}

\medskip

We may refer to a minimum caterpillar partition of a forest $F$ as $\cP(F)$. For any minimum caterpillar partition $\cP$ of a forest $F$ we call the edges of $F$ between caterpillars of $\cP$, \emph{branch edges} and the endpoints of branch edges, \emph{branch vertices}. We say that a caterpillar $C\in \cP$ is a \emph{leaf caterpillar} if it contains only one branch vertex. Two caterpillars in $\cP$ are \emph{neighbors} if there is a branch edge between them.

\medskip

We now introduce the useful concept of the structure produced by contracting caterpillars in a minimum caterpillar partition of a forest. For any forest $F$ with minimum caterpillar partition $\cP$, we define the \emph{canopy graph} as the graph $CP(F,\cP)$, or just $CP$ when clear from context, with vertex set corresponding to the caterpillars of $\cP$ contracted to vertices. If $\cP=\{C_1,\dots, C_{\ell}\}$, then we write $V(CP)=\{c_1,\dots, c_{\ell}\}$. Two vertices $c_i$ and $c_j$ of $CP$ are adjacent if the corresponding caterpillars $C_i$ and $C_j$ are neighbors.

\begin{obs}\label{canopy forest}
For any caterpillar partition $\cP$ of a forest $F$, $CP(F,\cP)$ is a forest.
\end{obs}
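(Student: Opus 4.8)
The plan is to argue by contradiction, using two structural facts: each caterpillar of $\cP$ is a tree and hence induces a \emph{connected} subgraph of $F$, and by definition every branch edge of $F$ joins two \emph{distinct} caterpillars of $\cP$. The second fact immediately shows $CP$ has no loops. Next I would dispose of multiple edges: if two caterpillars $C_i,C_j$ were joined by two distinct branch edges $u_1w_1$ and $u_2w_2$ with $u_1,u_2\in C_i$ and $w_1,w_2\in C_j$, then the unique $u_1$--$u_2$ path inside the tree $C_i$, the unique $w_1$--$w_2$ path inside the tree $C_j$, together with these two branch edges, would close up a cycle in $F$, contradicting that $F$ is a forest. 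So $CP$ is simple, and it remains to rule out cycles of length at least $3$.

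Now suppose $CP$ contains a cycle $c_{i_1}c_{i_2}\cdots c_{i_m}c_{i_1}$ with $m\ge 3$ and the $c_{i_j}$ pairwise distinct. For each $j$, with indices read modulo $m$, the adjacency of $c_{i_j}$ and $c_{i_{j+1}}$ in $CP$ provides a branch edge $e_j=a_jb_{j+1}$ of $F$ with $a_j\in C_{i_j}$ and $b_{j+1}\in C_{i_{j+1}}$; inside the tree $C_{i_j}$ let $Q_j$ be the unique path from $b_j$ to $a_j$ (a single vertex if $b_j=a_j$). Concatenating $Q_1,e_1,Q_2,e_2,\dots,Q_m,e_m$ produces a closed walk in $F$ starting and ending at $b_1$. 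Since the vertex sets $C_{i_1},\dots,C_{i_m}$ are pairwise disjoint and the branch edges $e_1,\dots,e_m$ are pairwise distinct (consecutive caterpillars along the cycle are distinct, so each $e_j$ joins a distinct pair of caterpillars), this closed walk repeats no vertex except $b_1$ and repeats no edge, so it is an honest cycle of $F$ — again contradicting that $F$ is a forest. Hence $CP$ is acyclic, i.e., a forest.

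I do not anticipate a genuine obstacle here; this is essentially the standard fact that contracting vertex-disjoint connected subgraphs of a forest yields a forest, specialized to the present setting. The only point that requires a little care is the last claim of the second paragraph: one must check that the object assembled from the internal paths $Q_j$ and the branch edges $e_j$ is a true cycle rather than merely a closed walk, and this is exactly where one invokes that the caterpillars of $\cP$ are pairwise vertex-disjoint and that consecutive caterpillars along a cycle of $CP$ force distinct branch edges.
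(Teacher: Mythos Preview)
Your argument is correct and follows essentially the same route as the paper: the paper's proof is the single sentence ``any cycle in $CP$ can be extended to a cycle in $F$, yielding a contradiction,'' and your proposal is precisely a careful unpacking of that sentence. Your separate treatment of loops and multiple edges is harmless but unnecessary here, since $CP$ is defined as a simple graph (adjacency is declared between distinct $c_i,c_j$, not a multiset of edges).
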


\begin{proof}
Notice that any cycle in $CP$ can be extended to a cycle in $F$, yielding a contradiction.
\end{proof}

\medskip

Suppose $\cP=\{C_1,\dots, C_{\ell}\}$ and that for any $i\in [\ell]$, $C_i$ contains $b(i)$ branch vertices. Let $L$ be a maximum path of $C_i$ represented from left to right. If $v$ is a vertex of $L$, we say that the \emph{position} of $v$ is $1$ when $v$ is the left-most vertex on $L$. If $v$ is some other vertex of $L$, then the position of $v$ is one plus the distance from the vertex of position $1$. If $v$ is a vertex not on $L$, then the position of $v$ is the distance of $v$ from the vertex of position $1$. Let us define the branch vertices of $C_i$ as $v_{i_1},\dots,v_{i_{b(i)}}$ where $i_j<i_{j'}$ when the position of $v_{i_j}$ is smaller than the position of $v_{i_{j'}}$. Define the \emph{rank} of a branch vertex $v_{i_j}$ as $j$.

In other words, the rank of a branch vertex on $C_i$ is one plus the number of branch vertices that preceded it (with respect to position) on $C_i$, when counting from left to right. We note here that there may be more than one branch vertex with the same position.

We say that an integer label is \emph{available} if it has not been used previously on a vertex.

\begin{lem}\label{11}
For any minimum caterpillar partition $\cP$ of a forest $F$, if $F$ contains a component which is not a caterpillar, then there exist distinct integers $i$ and $j$ between $1$ and $\ell$ such that $C_i$ and $C_j$ each contain vertices which are adjacent in $F$ and have rank $1$.
\end{lem}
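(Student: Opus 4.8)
The plan is to argue by contradiction and construct an impossibly long chain of distinct caterpillars. Suppose that no branch edge of $\cP$ has both of its endpoints of rank $1$. The non-caterpillar component $K$ of $F$ contains at least two caterpillars of $\cP$ (otherwise $K$ would equal a single caterpillar), so the subgraph $CP_K$ of the canopy graph $CP(F,\cP)$ spanned by the caterpillars lying in $K$ is, by Observation~\ref{canopy forest} and the connectedness of $K$, a tree on at least two vertices; in particular it has a leaf. I will also use the fact that any two caterpillars of $\cP$ are joined by at most one branch edge, since a second one would close a cycle in the forest $F$.

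Starting from a leaf $C^{(0)}$ of $CP_K$, note that its unique branch edge $e_1=z^{(0)}w^{(1)}$ has its endpoint $z^{(0)}\in C^{(0)}$ equal to the only branch vertex of $C^{(0)}$, so $z^{(0)}$ has rank $1$, where $w^{(1)}\in C^{(1)}$. Inductively, suppose we have arrived at a vertex $w^{(i)}\in C^{(i)}$ along a branch edge $e_i=z^{(i-1)}w^{(i)}$ whose other endpoint $z^{(i-1)}$ has rank $1$. By the contradiction hypothesis, $w^{(i)}$ cannot have rank $1$, so $C^{(i)}$ has a branch vertex of strictly smaller position than $w^{(i)}$; picking a rank-$1$ branch vertex $z^{(i)}$ of $C^{(i)}$ we get that $z^{(i)}$ and $w^{(i)}$ are distinct vertices, and since $z^{(i)}$ is a branch vertex we may follow some branch edge $e_{i+1}=z^{(i)}w^{(i+1)}$ at it, reaching $C^{(i+1)}$.

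The step I expect to require the most care is verifying that $C^{(i+1)}$ is new, that is, different from each of $C^{(0)},\dots,C^{(i)}$. It differs from $C^{(i)}$ since $e_{i+1}$ joins distinct caterpillars, and from $C^{(i-1)}$ since otherwise $e_i$ and $e_{i+1}$ (which are distinct, as $w^{(i)}\neq z^{(i)}$) would be two branch edges between the same pair of caterpillars. If $C^{(i+1)}=C^{(m)}$ for some $m\le i-2$, then $C^{(m)},C^{(m+1)},\dots,C^{(i)},C^{(m)}$ would be a cycle in $CP_K$: its vertices are distinct and its edges $e_{m+1},\dots,e_{i+1}$ are pairwise distinct, because each $e_j$ with $m+1\le j\le i$ joins consecutive caterpillars on the path $C^{(m)},\dots,C^{(i)}$ while $e_{i+1}$ joins the non-consecutive pair $C^{(i)},C^{(m)}$. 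This contradicts that $CP_K$ is a tree. Hence all the $C^{(i)}$ are distinct, which is impossible since $\cP$ is finite; so some branch edge has both endpoints of rank $1$, and the two caterpillars containing those endpoints serve as the required $C_i$ and $C_j$.
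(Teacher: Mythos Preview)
Your proof is correct and follows essentially the same approach as the paper's: iteratively hop from a rank-$1$ branch vertex of one caterpillar to the next, and use finiteness of $\cP$ to force a rank-$1$/rank-$1$ adjacency. Your version is in fact more carefully argued than the paper's---you explicitly justify (via the ``at most one branch edge between any pair'' observation and the tree structure of $CP_K$) that the walk never revisits an earlier caterpillar, a point the paper asserts without proof; the only minor slip is the phrase ``strictly smaller position'' (ties in position are allowed), but since different rank already implies $z^{(i)}\neq w^{(i)}$, this does not affect the argument.
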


\begin{proof}
Choose a caterpillar, say $C_1$, contained in a component which is not a caterpillar, and call the vertex on $C_1$ of rank $1$, $u_1$. If $u_1$ is adjacent to a vertex of rank $1$ on another caterpillar, the proof is complete. Otherwise, say $u_1$ is adjacent to a vertex on a caterpillar of $\cP - C_1$, $C'_2$. Call the vertex of rank $1$ on $C'_2$,  $u_2$. If $u_2$ is adjacent to a vertex of rank $1$ on another caterpillar, the proof is complete. Otherwise, say $u_2$ is adjacent to a vertex on a caterpillar of $\cP - C_1 - C'_2$, $C'_3$. Continue this process and notice that it must result in a vertex of rank $1$ adjacent to another vertex of rank $1$, since there are finitely many caterpillars in $\cP$.
\end{proof}

\begin{alg}[Forest Labeling] Let $F$ be a given non-trivial forest. For the forest $F$, choose a minimum caterpillar partition $\cP=\{C_1,\dots, C_{\ell}\}$. For every $i\in [\ell]$, define the branch vertices of $C_i$ as $v_{i_1},\dots,v_{i_{b(i)}}$. In the labeling that follows, we label vertices by consecutive integers, starting with $1$. Perform the following labeling for every $i\in [\ell]$.
\begin{enumerate}
\item Set $j=1$ and set $F_1=F$. 
\item For every $i$, perform the Caterpillar Labeling on $C_i$ on all non-labeled vertices up to the vertices of the same position as the branch vertex of rank $1$, except for the branch vertices on that position, if such a vertex exists. If no such vertex exists, perform the caterpillar labeling on the remaining non-labeled vertices of $C_i$.
\item For all branch vertices of rank $1$ that do not have the largest position of all vertices on the caterpillar to which they belong, label the branch vertices of rank $1$ which are adjacent to other branch vertices of rank $1$ by consecutive integers, starting with the smallest available label. Furthermore, when labeling consecutive branch vertices such that one is a leaf, label the leaf branch vertex before the non-leaf branch vertex.
\item Label every caterpillar which does not contain an unlabeled branch vertex by the Caterpillar Labeling
\item Remove all caterpillars which have all but one vertex labeled and then remove all remaining labeled vertices and their incident edges.
\item Let $j=j+1$ and call the remaining forest $F_j$, then repeat the labeling unless $F_j=\emptyset$.
\item Label all isolate vertices by the next consecutive available labels.
\end{enumerate}
\end{alg}

\begin{defn}
We say a forest $F$ is \emph{caterpillar-critical} if for any leaf edge $e$, $|\cP(F-e)|<|\cP(F)|$. 
\end{defn}

In other words, a forest is caterpillar-critical if the removal of any leaf edge produces a forest with fewer caterpillars in any minimum caterpillar partition.

\begin{lem}\label{cc}
Suppose $F$ is a caterpillar-critical forest with minimum caterpillar partition $\cP$. Then any leaf caterpillar $C\in \cP$ is one of the following graphs
\begin{enumerate}
\item $P_2$ with one branch vertex
\item $P_5$ with the vertex of position $3$ as the only branch vertex of $C$
\end{enumerate}
\end{lem}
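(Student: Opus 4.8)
The plan is to assume, for contradiction, that $C$ is a leaf caterpillar with unique branch vertex $w$ which is neither of the two listed graphs, and to violate one of the two hypotheses on $F$ — the minimality of $\cP$, or the caterpillar-criticality of $F$ — by one of two local exchange operations. The first operation, \emph{merging}, is aimed at minimality: if $C'$ is a neighbour of $C$ along a branch edge $wu$ with $u \in C'$, and the tree obtained from $C \cup C'$ by adding the edge $wu$ is again a caterpillar, then replacing $C$ and $C'$ by this union yields a caterpillar partition of $F$ with one fewer part. To decide whether such a union is a caterpillar I would use the standard fact that a tree is a caterpillar if and only if deleting all of its leaves leaves a path (equivalently, it contains no subdivided claw). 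This already eliminates $C = P_1$: its unique vertex is non-isolate in $F$, and by condition (2) every branch edge at it meets a non-leaf of a neighbour, so $P_1$ can be absorbed as a pendant into any neighbour. Since $C = P_2$ with one branch vertex is permitted, from now on $|V(C)| \ge 3$ and $v_1 \cdots v_k$ (with $k \ge 3$) is a spine of $C$.

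The second operation, \emph{add-back}, is aimed at caterpillar-criticality. Fix a leaf $y$ of $F$ lying in $C$, let $x$ be its neighbour, and set $e = xy$, a leaf edge of $F$. Given \emph{any} caterpillar partition $\cP'$ of $F - e$, let $D$ be the part of $\cP'$ containing $x$ and try to build a caterpillar partition of $F$ by re-inserting $y$ as a leaf at $x$; this fails only if $x$ lies off every spine of $D$. Suppose now $x$ has a second leaf-of-$F$ neighbour $z \ne y$. In $\cP'$, either $z \in D$ — and then either $\deg_D(x) \ge 2$, so $x$ is an internal spine vertex of $D$, or $D = \{x,z\} = P_2$, in which case re-inserting $y$ turns $D$ into a $P_3$ with $x$ at its centre — or else $z$ forms its own part and the branch edge $xz$, by condition (2), forces $x$ to be a non-leaf of $D$; in every case the re-insertion succeeds, so $|\cP(F)| \le |\cP'|$, whence $|\cP(F-e)| \ge |\cP(F)|$, contradicting caterpillar-criticality. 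Thus in a caterpillar-critical forest no vertex is adjacent to two leaves of $F$, and applying this inside $C$ forces $C$ to be ``thin'' — essentially a path carrying at most one pendant at each internal spine vertex and none at $v_2$ or $v_{k-1}$, with the exact statement depending on where $w$ sits.

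It then remains to push through the case analysis on the position of $w$ — a spine endpoint, an internal spine vertex $v_p$ (with $p \le \lceil k/2 \rceil$ by symmetry), or a pendant — reducing each surviving thin possibility to a contradiction except for $C = P_5$ with $w = v_3$. The generic move is: remove the leaf edge $v_{k-1}v_k$ at the end of $C$ farthest from $w$, and re-insert $v_k$ into a minimum caterpillar partition of $F - v_{k-1}v_k$ (which has exactly $|\cP(F)| - 1$ parts by criticality) so as to contradict minimality; this disposes of all configurations except a handful of short ones (essentially $P_3$ with $w$ a spine endpoint and $P_4$ with $w = v_2$, plus the thin caterpillars that carry a pendant), which are then cleared up by further applications of merging and add-back together with the freedom to reroute a caterpillar's spine — a leaf at a spine vertex adjacent to a spine endpoint can itself be made a spine endpoint.

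The step I expect to be the main obstacle is precisely the re-insertion argument when an alternative partition $\cP'$ of $F - e$ ``cannibalises'' neighbours of $C$, making the neighbour $x$ of the removed leaf a forced pendant of its part $D$ of $\cP'$ (which happens exactly when $x$'s unique neighbour in $D$ is an internal, non-near-endpoint spine vertex of $D$): there one must carry out a small amount of local surgery on $D$ near $w$ rather than merely adding $y$ back, and the available levers are condition (2) — a leaf of $F$ must lie in the same part as its unique neighbour, which is then a non-leaf of that part — together with the thinness of $C$ already established, which sharply limits how the relevant vertices can be distributed among the parts of $\cP'$.
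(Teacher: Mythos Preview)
Your outline can probably be pushed through, but it is substantially more elaborate than the paper's argument, and the paper sidesteps precisely the obstacle you flag as ``the main one.'' The paper never tries to re-insert a deleted leaf into an \emph{arbitrary} minimum partition $\cP'$ of $F-e$; instead it argues entirely locally within the given $\cP$. Since $C$ is a leaf caterpillar with a unique neighbour $C'$ in $\cP$, the only way removing a leaf $v$ of $C$ can make $|\cP|$ drop is for the trimmed caterpillar $C-v$ to become mergeable with $C'$ into a single caterpillar. Two observations then finish the structural reduction: (i) if $e$ is any leaf edge of $C$ other than $v_1v_2$ or $v_{k-1}v_k$, then $C-v$ has exactly the same spine and the same branch vertex as $C$, so it merges with $C'$ if and only if $C$ does --- which is impossible by the minimality of $\cP$; hence no such pendant edges exist. (ii) If the branch vertex $w$ were off the spine $L$, then both $v_1$ and $v_k$ would lie at distance $\ge 2$ from $w$, so removing either $v_1v_2$ or $v_{k-1}v_k$ still leaves a vertex at distance $\ge 2$ from $w$ in $C-v$, and no merge with $C'$ through $w$ can yield a caterpillar. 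Together these force $C$ to be a \emph{path} with $w$ on it --- strictly stronger than your ``thinness'', which only kills pendants at $v_2$ and $v_{k-1}$ and leaves one pendant possible at each interior spine vertex. Once $C$ is a path, criticality at both endpoint edges forces $w$ within distance $2$ of each end, so $k\le 5$, and $k\in\{3,4\}$ are excluded because in those cases the merge with $C'$ would already have been available for $C$ itself. Your weaker thinness is exactly why you are then driven into a case analysis on the position of $w$ and into the local surgery you anticipate when a foreign partition $\cP'$ cannibalises neighbours of $C$; adopting the paper's ``can $C-v$ merge with its unique neighbour $C'$?'' viewpoint eliminates all of that machinery. (One may fairly note that the paper's reduction to this single-merge question is stated somewhat informally; but that is the lever it pulls, and it is the idea missing from your plan.)
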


\begin{proof}
Suppose $C$ is a leaf caterpillar of $\cP$ which contains a path 

\noindent $L=\{v_1,\dots, v_k\}$ of maximum length. 

We first note that $C$ can have no leaf edges of $F$ other than $v_1v_2$ and $v_{k-1}v_k$, since we can remove such an edge without reducing the number of caterpillars in a minimum caterpillar partition, contradicting the criticality of $F$. In other words, the removal of such a leaf edge does not allow for $C$ to combine with another caterpillar to form a new caterpillar. 

Next, we claim that only vertices of $L$ may be branch vertices. Indeed, if $v$ not on $L$ is a branch vertex, then for any $k\ge 3$, $v_1$ and $v_k$ are each of distance at least $2$ to $v$. Now the removal of either edge $v_1v_2$ or $v_{k-1}v_k$ does not reduce the number of caterpillars in a minimum caterpillar partition, contradicting the criticality of $F$. In other words, if we remove $v_1v_2$ or $v_{k-1}v_k$ from $C$, then the resulting caterpillar cannot be combined with one of its neighbors to create a new caterpillar, which is a contradiction.

Together, these two observations imply that $C$ is a path. Since $C$ is a leaf caterpillar, it must have exactly one branch vertex. Notice that no branch vertex $v$ of $C$ can be of distance more than $2$ from either $v_1$ (or $v_k$), since otherwise removing $v_1v_2$ ($v_{k-1}v_k$) does not reduce the number of caterpillars in a minimum caterpillar partition. This means that $k\le 5$. If $k$ is $3$ or $4$, then there are two vertices of distance at least $2$ to the neighboring caterpillar to $C$. In this case, the removal of a leaf edge from $C$ does not allow us to combine it with its neighbor to form a caterpillar, since otherwise we could have combined $C$ with its neighbor to form a caterpillar, contradicting the minimality of $\cP$. Again, this contradicts the criticality of $F$.
\end{proof}

\begin{lem}\label{ccleaf}
If $F$ is a caterpillar-critical forest so that every minimum caterpillar partition $\cP$ does not contain a leaf caterpillar that is $P_5$, then every leaf caterpillar in $\cP$ is a neighbor of a caterpillar that is $P_5$ with the central vertex as the only branch vertex.
\end{lem}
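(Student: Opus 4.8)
The plan is to combine Lemma~\ref{cc} with the hypothesis to see that every leaf caterpillar of $\cP$ must be a $P_2$, and then to show that the presence of such a caterpillar forces some minimum caterpillar partition of $F$ to contain a $P_5$ leaf caterpillar; since the hypothesis forbids this, $\cP$ in fact has no leaf caterpillar at all, and the assertion holds vacuously. I would first reduce to the connected case, since caterpillar-criticality and the notion of a minimum caterpillar partition are both component-wise and a caterpillar component is a single part of $\cP$ with no branch vertex; so assume $F$ is connected and not a caterpillar. Then $CP=CP(F,\cP)$ is a tree on at least two vertices by Observation~\ref{canopy forest}, so it has a leaf, and since at most one branch edge joins any two caterpillars of a tree, a leaf of $CP$ is a leaf caterpillar with exactly one neighbour. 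Fix such a caterpillar $C=\{u,v\}$, with $u$ its unique branch vertex (so $uv$ is a leaf edge of $F$), and let $C'$ be its unique neighbour, along the branch edge $uw$ with $w\in C'$.

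Condition~(2) in the definition of a minimum caterpillar partition forces the branch edge $uw$ to meet a non-leaf of $C$ or of $C'$; as both vertices of $C$ are leaves of $C$, the vertex $w$ is a non-leaf of $C'$, hence an interior vertex of a longest path $L'=w'_1\cdots w'_{m'}$ of $C'$, say $w=w'_a$. Since $\cP$ is minimum, $C\cup C'$ is not a caterpillar: otherwise, replacing $C$ and $C'$ by $C\cup C'$ produces a caterpillar partition with one fewer part (condition~(2) survives, as attaching $u,v$ as a path of length two off $w$ only changes $u$ from a leaf into a non-leaf and alters no other vertex's status). But $C\cup C'$ is exactly $C'$ with a pendant path of length two attached at the interior vertex $w$, and an elementary check on longest paths shows that such a graph is a caterpillar if and only if $w$ is adjacent to an endpoint of $L'$; hence $3\le a\le m'-2$, and in particular $m'\ge 5$. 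The key remaining step is to invoke caterpillar-criticality to force $a=3$ (or symmetrically $a=m'-2$) and to force $w'_1$ and $w'_2$ to carry no legs and no branch edges: if $w$ lay at distance at least $3$ from both ends of $L'$, or if a near-end vertex of $C'$ carried extra structure, then one can exhibit a leaf edge of $F$ whose deletion does not lower $|\cP|$, contradicting criticality. Granting this, the set $\{v,u,w'_3,w'_2,w'_1\}$ induces a path (its vertices consecutive in the listed order), and re-partitioning $C\cup C'$ as this $P_5$ together with the sub-caterpillar $C'-\{w'_1,w'_2,w'_3\}$ keeps the number of parts unchanged and leaves $w=w'_3$ as the only vertex of the $P_5$ incident to a different part. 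Hence this $P_5$ is a $P_5$ leaf caterpillar of a minimum caterpillar partition of $F$, contradicting the hypothesis, and so $\cP$ has no leaf caterpillar.

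I expect the step flagged as key above to be the main obstacle. Converting each bad configuration --- ``$w$ central in a long spine,'' ``$C'$ has a leg near the relevant end,'' or ``a near-end vertex of $C'$ is itself a branch vertex'' --- into a violation of caterpillar-criticality requires singling out the right leaf edge of $F$ to delete and then arguing that the shortened forest admits no caterpillar partition with fewer than $|\cP|$ parts; and when $C'$ contains no leaf edge of $F$ at all (both endpoints of $L'$ being branch vertices) one must instead follow a path in $CP$ out to another leaf of $CP$ --- itself a $P_2$ leaf caterpillar --- and run the argument along that path, so a clean write-up is likely to proceed by induction on $|V(F)|$ or on $|\cP|$. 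One should also verify condition~(2) at the branch edges produced by the re-partition, which is where the ``no legs, no branch edges'' conclusions about $w'_1,w'_2$ (and a little care about $w'_4$) are needed. Lemma~\ref{11}, which produces adjacent rank-$1$ vertices, should be helpful in organising such a path argument.
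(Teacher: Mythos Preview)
Your overall plan—to show the lemma is vacuously true—is correct, and is in fact a sharper reading of what the paper's own argument establishes: once one shows that the neighbour $C'$ (the paper's $C_2$) of a $P_2$ leaf caterpillar is a $P_5$ whose \emph{only} branch vertex is its centre, then by definition $C'$ is itself a $P_5$ leaf caterpillar, contradicting the hypothesis. The paper stops at the stated conclusion without drawing this last inference, but it is there.

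Where your approach diverges from the paper is in the ``key step.'' The paper avoids your proposed induction entirely by choosing $c_1$ to be an endpoint of a \emph{maximum} path $M$ in the canopy graph $CP$. That single choice does the heavy lifting: by maximality of $M$, every neighbour of $c_2$ in $CP$ other than $c_3$ must be a leaf of $CP$, hence (by Lemma~\ref{cc} and the hypothesis) a $P_2$; if $C_2$ had no leaf of $F$, one of its leaf branch vertices would be adjacent to such a $P_2$, which could then be absorbed into $C_2$, contradicting minimality of $\cP$. With a leaf of $F$ in hand, criticality pins $C_2$ down as a path with branch vertices only at distance $2$ from its ends, and the re-partition (your same re-partition) disposes of $k>5$. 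Your sketch attempts to reach these conclusions from an \emph{arbitrary} leaf of $CP$, which is why you are forced to contemplate following paths in $CP$ and setting up an induction; this may be workable but is considerably messier, and you have not supplied it.

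There is also a technical point in your re-partition that you underestimate. When you split $C\cup C'$ into $\{v,u,w'_3,w'_2,w'_1\}$ and $C'-\{w'_1,w'_2,w'_3\}$, vertices of the second piece (e.g.\ your $w'_4$, or legs at $w'_4$) may have been non-leaves of $C'$ but become leaves of the new piece; any old branch edge at such a vertex could then violate condition~(2). ``A little care about $w'_4$'' is not enough here—one actually needs to know that all branch vertices of $C'$ sit at $w'_3$, which is exactly what the paper's max-path argument delivers but your arbitrary-leaf setup does not.

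In short: keep the vacuity observation, but the clean route to it is to run the maximum-path-in-$CP$ argument as in the paper and then note at the end that the resulting $P_5$ neighbour is itself a leaf caterpillar.
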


\begin{proof}
Let $M$ be a path of maximum length in the canopy graph $CP$. If $c_1$ is an end vertex of $M$, then by Lemma \ref{cc}, $c_1$ corresponds to a caterpillar of $F$ which is either $P_5$ or $P_2$ and we must assume the latter. Suppose further that $M=\{c_1,c_2,\dots, c_m\}$ with consecutive vertices as neighbors. Let $C_2$ be the caterpillar of $F$ which corresponds to the vertex $c_2$ in $CP$. If $C_2$ does not contain a leaf in $F$, then the leaves in the induced subgraph of $C_2$ in $F$, $[C_2]_F$, must be branch vertices in $F$. Since all caterpillars have at least two leaves, there are at least two such branch vertices. Call one such branch vertex $x$ and the other $y$, and call the caterpillar of $\cP$ containing a branch vertex adjacent to $x$, $C'$, and the one containing a branch vertex adjacent to $y$, $C''$. Notice that by maximality of $M$ and the fact that $c_1$ is an end vertex of $L$, there exists at most one neighbor of $c_2$ in $CP$, which is not a leaf of $CP$, otherwise $CP$ would have a longer path than $M$. This means that either $C'$ or $C''$ is a leaf caterpillar in $\cP$. Without loss of generality suppose that it is $C'$. Notice that $C'$ cannot be $P_2$ since in that case it could be combined with $C_2$ to produce a larger caterpillar, contradicting the minimality of $\cP$. Thus, by Lemma \ref{cc}, $C'$ must be $P_5$, which leads to a contradiction to the preclusion of $P_5$ leaf caterpillars. Thus, $C_2$ must contain at least one leaf of $F$.

Next, let $L=\{v_1, \dots, v_k\}$ be a path of maximum length of $C_2$ and notice as in the proof of Lemma \ref{cc}, since $F$ is caterpillar-critical, $C_2$ can have no leaf edges of $F$ other than $v_1v_2$ and $v_{k-1}v_k$. It is also easy to see that $k\ge 5$, since otherwise $C_2$ could be combined with $C_1$ to form a larger caterpillar, contradicting the minimality of the caterpillar partition $\cP$. Furthermore, we may argue as in the proof of Lemma \ref{cc} that only vertices of $L$ may be branch vertices since otherwise the removal of $v_1v_2$ or $v_{k-1}v_k$ would not reduce the size of $\cP$. This means that $C_2$ is a path.

By the maximality of $M$ in $CP$, we note that there can be no vertex in $CP$ of distance $2$ to $c_2$ which is not on $M$. This means that any neighbors of $c_2$ other than $c_1$ or $c_3$ must be leaves, and by assumption, correspond  to caterpillars of $F$ which are $P_2$. We now note that $v_3$ and $v_{k-3}$ in $C_2$ must be branch vertices since otherwise the removal of $v_1v_2$ or $v_{k-1}v_k$ would not reduce the size of $\cP$, contradicting the criticality of $F$. 

If $k>5$, then $C_3$ is either {\bf not adjacent} to $C_2$ by a branch edge to $v_3$ or to $v_{k-3}$. Without loss of generality, we will assume that $C_3$ is not adjacent to $C_2$ by a branch edge to $v_3$. Let $C$ be a caterpillar adjacent to $C_2$ by a branch edge to $v_3$. Then $C$ is a leaf caterpillar $P_2=\{u_1, u_2\}$ where $u_2$ is a branch vertex. Note now that we may produce a new minimum caterpillar partition $\cP'$ from $\cP$ by combining $C_1$ and $C_2$ into a leaf caterpillar $P_5$ with the remaining vertices of $C_2$ as its neighbor. That is, we form a leaf $P_5$ caterpillar from $\{v_1,v_2, v_3, u_2, u_1\}$, we define $C_2'$ as the caterpillar $\{v_4, \dots, v_k\}$, and we make no alterations to the rest of the caterpillars in $\cP$. Since $|\cP'|=|\cP|$, this contradicts our assumptions about $F$. Thus, we conclude that $k=5$ and notice that only $v_3$ can be a branch vertex, so the proof is complete.
\end{proof}

\begin{thm}\label{forest}
For any forest $F$, $\gamma_{gr}(F)= |V(F)|-|\cP|$.
\end{thm}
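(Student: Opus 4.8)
The plan is to prove the two inequalities $\gamma_{gr}(F)\ge |V(F)|-|\cP|$ and $\gamma_{gr}(F)\le |V(F)|-|\cP|$ by different routes: the lower bound comes directly from the Forest Labeling, while the upper bound I would prove by induction on $|V(F)|$, using the non-caterpillar-critical case as a cheap leaf-deletion reduction and Lemmas~\ref{cc} and~\ref{ccleaf} to handle the caterpillar-critical case.

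For the lower bound, the task is to check that the legal sequence built by the Forest Labeling is indeed legal and has length $|V(F)|-|\cP|$. Legality is verified in the spirit of Proposition~\ref{cat}: inside a single caterpillar $C_i$ the labelled leaves footprint themselves and each spine vertex footprints the next spine vertex; the only new behaviour occurs at branch vertices, and this is exactly where condition~(2) of a minimum caterpillar partition is needed --- since every branch edge is incident to a non-leaf of (at least) one of its two caterpillars, the rank-$1$ branch vertices handled in step~(3) of the algorithm can be processed in the prescribed order (leaf branch vertex before non-leaf branch vertex) so that each of them still footprints an as-yet-uncovered vertex, and Lemma~\ref{11} guarantees that such an adjacent pair of rank-$1$ vertices is always available to begin with. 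For the length, step~(5) discards a caterpillar only once all but one of its vertices are labelled, so when the process terminates exactly one vertex of each $C_i$ is unlabelled; since step~(7) labels every isolated vertex, the resulting legal sequence has length $\sum_{i=1}^{\ell}(|C_i|-1)$ plus the number of isolated vertices, which equals $|V(F)|-\ell=|V(F)|-|\cP|$.

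For the upper bound I would induct on $|V(F)|$. If $F$ is a disjoint union of caterpillars and isolated vertices, the claim is immediate from additivity of $\gamma_{gr}$ over components together with $\gamma_{gr}(C)=|V(C)|-1$ for a caterpillar (from~\eqref{treeineq}) and $\gamma_{gr}(K_1)=1$, since then $|\cP|$ is the number of caterpillar components. If $F$ is not caterpillar-critical, choose a leaf edge $e=uv$ ($v$ a leaf) with $|\cP(F-e)|=|\cP(F)|$; then necessarily $\deg_F(u)\ge 2$, so $F-v$ and $F-e$ induce the same graph on their non-isolated vertices and $|\cP(F-v)|=|\cP(F)|$, whence by the inductive hypothesis $\gamma_{gr}(F-v)=|V(F)|-1-|\cP(F)|$, and Proposition~\ref{simp}(1) applied to the simplicial vertex $v$ gives $\gamma_{gr}(F)\le \gamma_{gr}(F-v)+1=|V(F)|-|\cP(F)|$. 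Finally, if $F$ is caterpillar-critical and has a component that is not a caterpillar, then by Observation~\ref{canopy forest} the canopy graph has a vertex of degree one, i.e. $\cP$ has a leaf caterpillar, and by Lemmas~\ref{cc} and~\ref{ccleaf} --- separating the case where some minimum partition has a $P_5$ leaf caterpillar from the case where none does --- one obtains, in a suitable minimum caterpillar partition, a caterpillar $D\cong P_5$ with path $v_1v_2v_3v_4v_5$ whose only branch vertex is the central vertex $v_3$, so that $v_1$ and $v_5$ are leaves of $F$. Deleting $v_1$ and then $v_5$ (each simplicial, and $v_5$ still simplicial in $F-v_1$), Proposition~\ref{simp}(1) gives $\gamma_{gr}(F)\le \gamma_{gr}(F-v_1-v_5)+2$, so by the inductive hypothesis $\gamma_{gr}(F)\le |V(F)|-2-|\cP(F-v_1-v_5)|+2=|V(F)|-|\cP(F-v_1-v_5)|$; combined with $|\cP(F-v_1-v_5)|\ge |\cP(F)|$ and the lower bound, equality follows.

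The main obstacle is exactly that last inequality $|\cP(F-v_1-v_5)|\ge |\cP(F)|$: one must show that every caterpillar partition of $F-v_1-v_5$ can be turned into a caterpillar partition of $F$ by re-inserting $v_1$ and $v_5$ without creating any new caterpillar. Simply attaching the pendant $v_1$ at $v_2$ inside its caterpillar need not yield a caterpillar (for instance when $v_2$ sits as a leaf at the second spine vertex of a long caterpillar), so the argument forces a short surgery moving $v_2$, and possibly $v_3$ and $v_4$, among caterpillars; what makes it go through is that $P_3$ has a non-leaf centre and that condition~(2) requires a non-leaf endpoint of a branch edge in only \emph{one} of the two incident caterpillars, exactly the mechanism exploited in the proofs of Lemmas~\ref{cc} and~\ref{ccleaf}. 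A secondary technical point is the bookkeeping at branch vertices in the legality check of the Forest Labeling, where the interplay of rank, position, and the ordering in step~(3) must be tracked carefully.
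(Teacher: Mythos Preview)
Your lower-bound argument via the Forest Labeling matches the paper, and your treatment of the non-caterpillar-critical case (leaf deletion plus Proposition~\ref{simp}(1)) is a clean variant of the paper's edge-deletion step and is fine.

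The genuine gap is in the caterpillar-critical case: the inequality $|\cP(F-v_1-v_5)|\ge |\cP(F)|$ that you single out as the ``main obstacle'' is simply false, and no surgery can rescue it. Take $F$ to be the tree on seven vertices obtained from the path $v_1v_2v_3v_4v_5$ by attaching a pendant path $x,y$ at the centre (edges $v_3x$ and $xy$). Here $\cP(F)=\{\{v_1,\dots,v_5\},\{x,y\}\}$ is a minimum caterpillar partition, so $|\cP(F)|=2$; removing any of the three leaf edges $v_1v_2$, $v_4v_5$, $xy$ leaves a caterpillar, so $F$ is caterpillar-critical with the $P_5$ as a leaf caterpillar and $v_3$ its unique branch vertex. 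But $F-v_1-v_5$ is itself a caterpillar (spine $v_2v_3xy$ with pendant $v_4$), whence $|\cP(F-v_1-v_5)|=1<2=|\cP(F)|$. Your bound then gives only $\gamma_{gr}(F)\le \gamma_{gr}(F-v_1-v_5)+2=4+2=6$, whereas $\gamma_{gr}(F)=5$. No rearrangement of $v_2,v_3,v_4$ can produce a one-part caterpillar partition of $F$, because $F$ is not a caterpillar. The failure is intrinsic: caterpillar-criticality says exactly that deleting a single leaf \emph{does} collapse two caterpillars into one, so deleting both leaves of the $P_5$ can drop $|\cP|$ strictly.

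The paper avoids this by deleting the \emph{branch edge} $e=v_3x$ rather than the leaves. For that edge one genuinely has $|\cP(F-e)|=|\cP(F)|$, and the induction (on $|E(F)|$) reduces the problem to ruling out $\gamma_{gr}(F)=\gamma_{gr}(F-e)+1=|V(F)|-|\cP(F)|+1$. This last possibility is excluded not by partition bookkeeping but by a direct footprint/Pigeonhole analysis on the $P_5$ (or on the $P_2\cup P_5$ configuration supplied by Lemma~\ref{ccleaf}), showing that no legal sequence of $F$ can place that many vertices.
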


\begin{proof}
First, notice that by Lemma \ref{11}, every iteration of the Forest Labeling can be initiated. To prove that $\gamma_{gr}(T)\ge |V(T)|-|\cP|$ we argue that the Forest Labeling produces a legal sequence for any forest $F$ of length $|F|-|\cP|$. First, notice that step $(2)$ produces a legal sequence by the Caterpillar Labeling. In step $(3)$, notice that a branch vertex which is a leaf footprints itself, and that any branch vertex which is not a leaf footprints a vertex of position one larger on the same caterpillar. Step $(4)$ produces a legal sequence by the Caterpillar Labeling. Next, we note that while leaves may footprint themselves in the Caterpillar Labeling, non-leaf vertices footprint other vertices on the Caterpillar. This fact justifies the legality of iterating the Forest Labeling for increments of $j$, since if vertices that had been removed in the previous iteration footprinted vertices in a current iteration, the vertices in the current iteration do not only footprint themselves. Finally, notice that if $u$ is a branch vertex with the largest position on the corresponding caterpillar $C$, then $u$ is the only vertex of $C$ which is not in the Grundy dominating sequence.

\medskip

Next we show that 
\begin{align}
\gamma_{gr}(F)\le |V(F)|-|\cP|. \label{ub}
\end{align}

We induct on the size of $F$, $|E(F)|$. The statement is true for $P_2$ so we suppose that it holds for all forests with fewer than $m$ edges. Let $F$ be a forest of size $m$.

\begin{claim}\label{edgeremoval}
For any forest $F$, if $e$ is a leaf edge of $F$, then $\gamma_{gr}(F-e)\ge \gamma_{gr}(F)$.
\end{claim}

\begin{proof}
Let $S$ be a Grundy dominating sequence of $F$ and suppose that $e=xy$ with $x$ a leaf vertex. Notice that if both $x$ and $y$ are elements of $S$, then $x$ must be chosen in $S$ previous to $y$, else it would not have a footprint in $F$. Thus, $y$ must footprint some other vertex in $F$. This implies that we may choose $S$ for $F-e$, where $x$ footprints itself as an isolate. 

If $x$ is an element of $S$ but $y$ is not, then we may choose $S$ for $F-e$ and, if $y$ remains undominated, create the legal sequence $S'$ by choosing $y$ as a final vertex and adding it to $S$.

If $y$ is an element of $S$ but $x$ is not, then let $S''$ be $S$ with $y$ replaced by $x$. If $y$ remains undominated, create the legal sequence $S'''$ by choosing $y$ as a final vertex and adding it to $S''$.

Notice that in all of these instances, we produce a legal set of $F$ of size at least $|S|$. This completes the proof of the claim.
\end{proof}

Next, suppose that there exists a leaf edge $e$ so that the size of a minimum caterpillar partition of $F$ is the same as the size of a minimum caterpillar partition of $F-e$.

Let $\cP'$ be a minimum caterpillar partition of $F-e$. By Claim \ref{edgeremoval}, we write
\[\gamma_{gr}(F)\le \gamma_{gr}(F-e)\le |F-e|-|\cP'|\le |F|-|\cP|.\]

We are now left to assume that $F$ is caterpillar-critical. By Lemma \ref{cc}, every leaf caterpillar of $F$ is either $P_2$ or $P_5$. Let $C$ be a leaf caterpillar.

\medskip

Regardless of whether $C$ is $P_2$ or $P_5$, by Lemma \ref{cc}, it contains exactly one branch vertex which we will call $v$. Call the neighbor of $v$ on the neighboring caterpillar, $x$ and let $e=vx$.

We note that the size of any minimum caterpillar partition of $F$ is the same as the size of any minimum caterpillar partition of $F-e$. By Theorem \ref{edgedel}, we need only consider three possibilities, $\gamma_{gr}(F-e)=\gamma_{gr}(F)+1$, $\gamma_{gr}(F-e)=\gamma_{gr}(F)$, and $\gamma_{gr}(F-e)=\gamma_{gr}(F)-1$. Let $S$ be a maximum legal sequence of $F$. 

If there is a legal sequence $S'$ of $F-e$, so that $|S| = |S'|-1$, then by the induction hypothesis,
\[\gamma_{gr}(F)=|S| = |S'| - 1 \le \gamma_{gr}(F-e) - 1\le |F-e|-|\cP(F-e)|-1=|F|-|\cP(F)|-1\]
which which is a contradiction with the already proved direction of the theorem.

If there is a legal sequence $S'$ of $F-e$, so that $|S| = |S'|$, then by the induction hypothesis,
\[\gamma_{gr}(F)=|S| = |S'| \le \gamma_{gr}(F-e) \le |F-e|-|\cP(F-e)|=|F|-|\cP(F)|\]
which again proves the theorem.

\medskip

Thus, we restrict our attention to the case when $\gamma_{gr}(F)=\gamma_{gr}(F-e)+1$. Since by the induction hypothesis together with the lower bound, $\gamma_{gr}(F-e) = |F-e|-|\cP(F-e)|=|F|-|\cP(F)|$, this implies that 
\begin{align}\label{over}
\gamma_{gr}(F)=|F|-|\cP(F)|+1
\end{align}

Notice that if either $v$ is not in the footprint of $x$ or $x$ is not in the footprint $v$, then $S$ is still a legal sequence of $F-e$. The same conclusion is attained if neither $v$ nor $x$ belong to $\widehat{S}$. Also, if $x\in \widehat{S}$ and the footprint of $x$ contains $v$ and some other vertex of $F$, then $S$ is a legal sequence of $F-e$. The same is true if $v\in \widehat{S}$ and the footprint of $v$ contains $x$ and some other vertex of $F$.

\medskip

Next we consider the two possibilies for $C$. Assume $C=P_5$ and that it is composed of the vertices $v_1,v_2,v_3,v_4,v_5$ with leaves $v_1, v_5$, where vertices with consecutive indices are adjacent. This means that the branch vertex $v=v_3$.

Suppose first that $\{x\}$ is the footprint of $v_3$. Applying the Pigeonhole Principle to equation \eqref{over}, either $C$ contains $|V(C)|$ vertices of $\widehat{S}$ or $F-C$ contains at least $|V(F)|-|V(C)|-|\cP(F-C)|+1=|V(F)|-|V(C)|-|\cP(F)|+2$ vertices of $\widehat{S}$. 

Suppose $C$ contains $|V(C)|$ vertices of $\widehat{S}$. Notice that $v_1,v_5\in \widehat{S}$ and that $v_1$ and $v_5$ must come earlier in $S$ than $v_2$ and $v_4$. However, this leads to a contradiction since if $v_2$ came before $v_4$ in $S$, then $v_4$ cannot be in $S$, since it would not have a footprint, and if $v_4$ came before $v_2$, then $v_2$ could not be chosen in $S$. This means that $C$ cannot contain $|V(C)|$ vertices.

Suppose $F-C$ contains at least $|V(F)|-|V(C)|-|\cP(F)|+2$ vertices of $\widehat{S}$. Since the footprint of $v_3$ is $\{x\}$, we note that $\widehat{S}\cap (F-C)$ is a legal set for $F-C$. Since $|\cP(F-C)|=|\cP(F)|-1$, by the induction hypothesis we have that any legal set of $F-C$ must be of size at most $|V(F)|-|V(C)|-|\cP(F-C)|=|V(F)|-|V(C)|-|\cP(F)|+1$, which contradicts the assumption on the number of vertices of $\widehat{S}$ in $F-C$.

\medskip

Next, we suppose that $\{v_3\}$ is the footprint of $x$. Applying the Pigeonhole Principle, as previously, if we assume that $C$ contains $|V(C)|$ vertices of $\widehat{S}$, then $v_1$ and $v_5$ must be in $\widehat{S}$ and appear in $S$ before $v_2$ and $v_4$. Again, the appearance of $v_2$ in $S$ precludes $v_4$ and the appearance of $v_4$ precludes $v_2$.

This means that we may assume that $F-C$ contains at least $|V(F)|-|V(C)|-|\cP(F)|+2$ vertices of $\widehat{S}$. First, let us assume that $F-C$ contains exactly $|V(F)|-|V(C)|-|\cP(F)|+2$ vertices of $\widehat{S}$. Notice that since $\{v_3\}$ is the footprint of $x$, if $v_2\in \widehat{S}$, then $v_2$ footprints $v_1$ and so $v_1\notin \widehat{S}$. Similarly, if $v_4\in \widehat{S}$, then $v_4$ footprints $v_5$ and so $v_5\notin\widehat{S}$. This means that some two vertices of $C$ are not in $\widehat{S}$. However, now we have that
\[|S|\le |V(F)|-|V(C)|-|\cP(F)|+2 + |V(C)|-2 = |V(F)|-|\cP(F)|.\]

Next, assume that $F-C$ contains more than $|V(F)|-|V(C)|-|\cP(F)|+2$ vertices of $\widehat{S}$. We first consider the forest $F'=F-\{v_1,v_2,v_4,v_5\}$ and note that $v_3$ is a simplicial vertex in $F'$. Applying Proposition \ref{simp}, we note that $\gamma_{gr}(F-C)\ge \gamma_{gr}(F')-1$. Furthermore, note that $\gamma_{gr}(F')\ge |\widehat{S}\cap (F-C)|$ since $\widehat{S}\cap (F-C)$ is a legal set in $F'$. However, this leads to
\[\gamma_{gr}(F-C)\ge |\widehat{S}\cap (F-C)|-1> |V(F)|-|V(C)|-|\cP(F)|+1,\]
which contradicts the induction hypothesis that
\[\gamma_{gr}(F-C) \le |V(F)|-|V(C)|-|\cP(F-C) \le |V(F)-|V(C)|-|\cP(F)|+1.\]

\medskip

Finally, we may assume that $F$ contains no leaf caterpillars which are $P_5$ so that $C=P_2$ and that it is composed of the vertices $u_1$ and $u_2$ where $u_2$ is a branch vertex. By Lemma \ref{ccleaf}, $C$ is a neighbor of a caterpillar $C_1=P_5$ which is composed of the vertices $v_1,v_2,v_3,v_4,v_5$ with leaves $v_1, v_5$ and branch vertex $v_3$. If $v_3$ does not have a neighbor which is not on $C$ or $C_1$, then we say we are in case $(*)$, which implies that that the component of the canopy graph of $F$ contains only two vertices, caterpillars $P_2$ and $P_5$, and so $P_5$ is a leaf of the canopy graph, which has already been considered. Otherwise, if we are not in case $(*)$, let $x$ be the neighbor of $v_3$ which is not on $C$ or $C_1$ and let $e=v_3x$.

As in the previous case, we suppose first that $\{x\}$ is the footprint of $v_3$. Applying the Pigeonhole Principle to equation \eqref{over}, either $C\cup C_1$ contains at least $|V(C)|+|V(C_1)|-1$ vertices of $\widehat{S}$ or $F-C-C_1$ contains at least $|V(F)|-|V(C)|-|V(C_1)|-|\cP(F-C-C_1)|+1=|V(F)|-|V(C)|-|V(C_1)|-|\cP(F)|+3$ vertices of $\widehat{S}$. 

Suppose $C\cup C_1$ contains $|V(C)|+|V(C_1)|-1$ vertices of $\widehat{S}$. Notice that the footprinter of $v_3$ is either $v_2$, $v_4$, or $u_2$. If $u_2$ footprints $v_3$, then only one of $v_1$ or $v_2$ may belong to $\widehat{S}$, and similarly, only one of $v_4$ or $v_5$ may belong to $\widehat{S}$. This implies that $C\cup C_1$ may contain at most $|V(C)|+|V(C_1)|-2$ vertices of $\widehat{S}$. If $v_2$ footprints $v_3$, then only one of $v_4$ or $v_5$ may belong to $\widehat{S}$, and only one of $u_1$ or $u_2$ may belong to $\widehat{S}$. Again, this implies that $C\cup C_1$ may contain at most $|V(C)|+|V(C_1)|-2$ vertices of $\widehat{S}$. The assumption that $v_4$ footprints $v_3$ proceeds in the same way.

Suppose $F-C-C_1$ contains $|V(F)|-|V(C)|-|V(C_1)|-|\cP(F)|+3$ vertices of $\widehat{S}$. Since the footprint of $v_3$ is $\{x\}$, we note that $\widehat{S}\cap (F-C-C_1)$ is a legal set for $F-C-C_1$. Since $|\cP(F-C-C_1)|=|\cP(F)|-2$, by the induction hypothesis we have that any legal set of $F-C-C_1$ must be of size at most $|V(F)|-|V(C)|-|V(C_1)|-|\cP(F-C-C_1)|=|V(F)|-|V(C)|-|V(C_1)|+2$, which contradicts the assumption on the number of vertices of $\widehat{S}$ in $F-C-C_1$.

\medskip

Next, we suppose that $\{v_3\}$ is the footprint of $x$. Again, we apply the Pigeonhole Principle, assuming first that $C\cup C_1$ contains at least $|V(C)|+|V(C_1)|-1$ vertices of $\widehat{S}$. Since $\{v_3\}$ is the footprint of $x$, and not of $v_2$ or $v_4$, if $v_2$ is in $\widehat{S}$, then $v_2$ footprints $v_1$, and $v_1$ is not in $\widehat{S}$. Likewise, if $v_4$ is in $\widehat {S}$, then $v_4$ footprints $v_5$, and $v_5$ is not in $\widehat{S}$. However, this implies that $C\cup C_1$ may contain at most $|V(C)|+|V(C_1)|-2$ vertices of $\widehat{S}$. Note that this argument also holds if we are in case $(*)$.

This means that we are left with the case where $F-C-C_1$ contains at least $|V(F)|-|V(C)|-|V(C_1)|-|\cP(F)|+3$ vertices of $\widehat{S}$. First, assume that $F-C-C_1$ contains exactly $|V(F)|-|V(C)|-|V(C_1)|-|\cP(F)|+3$ vertices of $\widehat{S}$. Since $v_3$ is in the footprint of $x$ it is not footprinted by $v_2, v_4,$ or $u_2$. This means that if $v_2$ is in $\widehat{S}$, then it footprints $v_1$, if $v_4$ is in $\widehat{S}$, then it footprints $v_5$, and if $u_2$ is in $\widehat{S}$, then it footprints $u_1$. In each of these cases, the footprinted vertex is not in $\widehat{S}$. However, now we have $|\widehat{S}\cap (C\cup C_1)|\le |V(C)|+|V(C_1)|-3$, which leads to
\[|S|\le |V(F)|-|V(C)|-|V(C_1)|-|\cP(F)|+3 +|V(C)|+|V(C_1)|-3 = |V(F)|-|\cP(F)|.\]

Finally, we assume that $F-C-C_1$ contains more than $|V(F)|-|V(C)|-|V(C_1)| - |\cP(F)|+3$ vertices of $\widehat{S}$. We first consider forest $F'=F-\{v_1,v_2,v_4,v_5,u_1,u_2\}$ and note that $v_3$ is a simplicial vertex in $F'$. Applying Proposition \ref{simp}, we note that $\gamma_{gr}(F-C-C_1)\ge \gamma_{gr}(F')-1$. Furthermore, note that $\gamma_{gr}(F')\ge |\widehat{S}\cap(F-C-C_1)|$ since $\widehat{S}\cap(F-C-C_1)$ is a legal set in $F'$. However, this leads to
\[\gamma_{gr}(F-C-C_1)\ge|\widehat{S}\cap (F-C-C_1)|-1>|V(F)-|V(C)|-|V(C_1)|-|\cP(F)|+2,\]
which contradicts the induction hypothesis that
\begin{align*}
\gamma_{gr}(F-C-C_1)&\le|V(F)|-|V(C)|-|V(C_1)|-|\cP(F-C-C_1)|\\
&\le |V(F)|-|V(C)|-|V(C_1)|-|\cP(F)|+2.
\end{align*}

\end{proof}

We now show two examples of the Forest Labeling. In the first example we assume all caterpillars are paths so as to focus on running the algorithm through several iterations. In the second example, we allow for arbitrary caterpillars to illustrate the labeling of branch vertices which are leaves in a caterpillar. The marked edges in each of the following forests are the edges which can be removed to produce the caterpillars in the decomposition. The rank of vertices is shown inside branch vertices. The enclosed vertices are those which are removed at the end of the iteration for the given value of $j$.

\begin{center}
\par\noindent\rule{\textwidth}{0.4pt}
\medskip
\begin{tikzpicture}
    \begin{scope}[auto, every node/.style={draw,circle,minimum size=1em,inner sep=1},node distance=1cm]
                \color{black}
                \tikzset{dot/.style={fill=black,circle}}

                    \node[shape=circle,draw=black,label=above: $1$] (1) at (3,7){};
                    \node[shape=circle,draw=black,label=above: $2$] (2) at (4,7){};
                    \node[shape=circle,draw=black] (c11) at (5,7){\textcolor{blue}{1}};
                    \node[shape=circle,draw=black] (c12) at (6,7){};
                    \node[shape=circle,draw=black] (c13) at (7,7){};

                    \draw[line width=.5mm,black,-] (1) edge[-] (2);
                    \draw[line width=.5mm,black,-] (2) edge[-] (c11);
                    \draw[line width=.5mm,black,-] (c11) edge[-] (c12);
                    \draw[line width=.5mm,black,-] (c12) edge[-] (c13);

                    \node[shape=circle,draw=black,label=above: $3$] (3) at (1,6){};
                    \node[shape=circle,draw=black,label=above: $4$] (4) at (2,6){};
                    \node[shape=circle,draw=black,label=above: $14$] (14) at (3,6){\textcolor{blue}{1}};
                    \node[shape=circle,draw=black] (c21) at (4,6){};
                    \node[shape=circle,draw=black] (c22) at (5,6){\textcolor{blue}{2}};
                    \node[shape=circle,draw=black] (c23) at (6,6){};
                    \node[shape=circle,draw=black] (c24) at (7,6){};

                    \draw[line width=.5mm,black,-] (3) edge[-] (4);
                    \draw[line width=.5mm,black,-] (4) edge[-] (14);
                    \draw[line width=.5mm,black,-] (14) edge[-] (c21);
                    \draw[line width=.5mm,black,-] (c21) edge[-] (c22);
                    \draw[line width=.5mm,black,-] (c22) edge[-] (c23);
                    \draw[line width=.5mm,black,-] (c23) edge[-] (c24);

                     \node[shape=circle,draw=black,label=above: $5$] (5) at (1,5){};
                     \node[shape=circle,draw=black,label=above: $6$] (6) at (2,5){};
                     \node[shape=circle,draw=black,label=above right: $15$] (15) at (3,5){\textcolor{blue}{1}};
                     \node[shape=circle,draw=black] (c31) at (4,5){};
                     \node[shape=circle,draw=black] (c32) at (5,5){};
                     \node[shape=circle,draw=black] (c33) at (6,5){\textcolor{blue}{2}};
                     \node[shape=circle,draw=black] (c34) at (7,5){};
                     \node[shape=circle,draw=black] (c35) at (8,5){\textcolor{blue}{3}};
                     \node[shape=circle,draw=black] (c36) at (9,5){};
                     \node[shape=circle,draw=black] (c37) at (10,5){};

                     \draw[line width=.5mm,black,-] (5) edge[-] (6);
                     \draw[line width=.5mm,black,-] (6) edge[-] (15);
                     \draw[line width=.5mm,black,-] (15) edge[-] (c31);
                     \draw[line width=.5mm,black,-] (c31) edge[-] (c32);
                     \draw[line width=.5mm,black,-] (c32) edge[-] (c33);
                     \draw[line width=.5mm,black,-] (c33) edge[-] (c34);
                     \draw[line width=.5mm,black,-] (c34) edge[-] (c35);
                     \draw[line width=.5mm,black,-] (c35) edge[-] (c36);
                     \draw[line width=.5mm,black,-] (c36) edge[-] (c37);

                     \node[shape=circle,draw=black,label=above: $7$] (7) at (1,4){};
                     \node[shape=circle,draw=black,label=above: $8$] (8) at (2,4){};
                     \node[shape=circle,draw=black,label=above: $16$] (16) at (3,4){\textcolor{blue}{1}};
                     \node[shape=circle,draw=black] (c41) at (4,4){};
                     \node[shape=circle,draw=black] (c42) at (5,4){};
                     \node[shape=circle,draw=black] (c43) at (6,4){};
                     \node[shape=circle,draw=black] (c44) at (7,4){};
                     \node[shape=circle,draw=black] (c45) at (8,4){\textcolor{blue}{2}};
                     \node[shape=circle,draw=black] (c46) at (9,4){};

                     \draw[line width=.5mm,black,-] (7) edge[-] (8);
                     \draw[line width=.5mm,black,-] (8) edge[-] (16);
                     \draw[line width=.5mm,black,-] (16) edge[-] (c41);
                     \draw[line width=.5mm,black,-] (c41) edge[-] (c42);
                     \draw[line width=.5mm,black,-] (c42) edge[-] (c43);
                     \draw[line width=.5mm,black,-] (c43) edge[-] (c44);
                     \draw[line width=.5mm,black,-] (c44) edge[-] (c45);
                     \draw[line width=.5mm,black,-] (c45) edge[-] (c46);

                     \node[shape=circle,draw=black,label=above: $9$] (9) at (1,3){};
                     \node[shape=circle,draw=black,label=above: $10$] (10) at (2,3){};
                     \node[shape=circle,draw=black,label=above right: $17$] (17) at (3,3){\textcolor{blue}{1}};
                     \node[shape=circle,draw=black,label=above right: $18$] (18) at (4,3){};
                     \node[shape=circle,draw=black] (c51) at (5,3){};

                     \draw[line width=.5mm,black,-] (9) edge[-] (10);
                     \draw[line width=.5mm,black,-] (10) edge[-] (17);
                     \draw[line width=.5mm,black,-] (17) edge[-] (18);
                     \draw[line width=.5mm,black,-] (18) edge[-] (c51);

                     \node[shape=circle,draw=black,label=above: $11$] (11) at (4,2){};
                     \node[shape=circle,draw=black,label=above: $12$] (12) at (5,2){};
                     \node[shape=circle,draw=black] (c61) at (6,2){\textcolor{blue}{1}};
                     \node[shape=circle,draw=black] (c62) at (7,2){};
                     \node[shape=circle,draw=black] (c63) at (8,2){\textcolor{blue}{2}};
                     \node[shape=circle,draw=black] (c64) at (9,2){};
                     \node[shape=circle,draw=black] (c65) at (10,2){};

                     \draw[line width=.5mm,black,-] (11) edge[-] (12);
                     \draw[line width=.5mm,black,-] (12) edge[-] (c61);
                     \draw[line width=.5mm,black,-] (c61) edge[-] (c62);
                     \draw[line width=.5mm,black,-] (c62) edge[-] (c63);
                     \draw[line width=.5mm,black,-] (c63) edge[-] (c64);
                     \draw[line width=.5mm,black,-] (c64) edge[-] (c65);

                     \node[shape=circle,draw=black,label=above: $13$] (13) at (7,1){};
                     \node[shape=circle,draw=black] (c71) at (8,1){\textcolor{blue}{1}};
                     \node[shape=circle,draw=black] (c72) at (9,1){};
                     \node[shape=circle,draw=black] (c73) at (10,1){};

                     \draw[line width=.5mm,black,-] (13) edge[-] (c71);
                     \draw[line width=.5mm,black,-] (c71) edge[-] (c72);
                     \draw[line width=.5mm,black,-] (c72) edge[-] (c73);

                    \draw[line width=.5mm,black,-] (c11) edge[-] (c22);
                    \draw[line width=.5mm,black,-] (14) edge[-] (15);
                    \draw[line width=.5mm,black,-] (c33) edge[bend left,-] (c61);
                    \draw[line width=.5mm,black,-] (c35) edge[-] (c45);
                    \draw[line width=.5mm,black,-] (16) edge[-] (17);
                    \draw[line width=.5mm,black,-] (c63) edge[-] (c71);

                \draw [rounded corners,line width=.4mm,color=red](.5,2.5) -- (.5,6.75)
                      [rounded corners] -- (2.5,6.75) -- (2.5,7.75)
                      [rounded corners] -- (4.5,7.75) -- (4.5,6.5)
                      [rounded corners] -- (3.59,6.5) -- (3.59,3.59)
                      [rounded corners] -- (5.5,3.59) -- (5.5,2.75)
                      [rounded corners] -- (5.5,2.75) -- (5.5,1.75)
                      [rounded corners] -- (7.5,1.75) -- (7.5,.5)
                      [rounded corners] -- (3.5,.5) -- (3.5,2.5) [rounded corners] -- cycle;

                    \draw (4.8,6.59) edge[line width=.5mm,red,-] (5.2,6.59);
                    \draw (2.8,5.59) edge[line width=.5mm,red,-] (3.2,5.59);
                    \draw (2.8,3.59) edge[line width=.5mm,red,-] (3.2,3.59);
                    \draw (6.3,3.59) edge[line width=.5mm,red,-] (6.7,3.59);
                    \draw (7.8,4.59) edge[line width=.5mm,red,-] (8.2,4.59);
                    \draw (7.8,1.59) edge[line width=.5mm,red,-] (8.2,1.59);

      \end{scope}

            \node[] at (12,7){$C_1$};
            \node[] at (12,6){$C_2$};
            \node[] at (12,5){$C_3$};
            \node[] at (12,4){$C_4$};
            \node[] at (12,3){$C_5$};
            \node[] at (12,2){$C_6$};
            \node[] at (12,1){$C_7$};

			\node[] at(12,8){$\emph{j=1}$};
    \end{tikzpicture}


\medskip
\par\noindent\rule{\textwidth}{0.4pt}
\medskip

    \begin{tikzpicture}
        \begin{scope}[auto, every node/.style={draw,circle,minimum size=1em,inner sep=1},node distance=1cm]
                    \color{black}
                    \tikzset{dot/.style={fill=black,circle}}

                        \node[shape=circle,draw=black,label=above: $26$] (26) at (2,6){\textcolor{blue}{1}};
                        \node[shape=circle,draw=black,label=above: $30$] (30) at (3,6){};
                        \node[shape=circle,draw=black] (c11) at (4,6){};

                        \draw[line width=.5mm,black,-] (26) edge[-] (30);
                        \draw[line width=.5mm,black,-] (30) edge[-] (c11);

                        \node[shape=circle,draw=black,label=above left: $19$] (19) at (1,5){};
                        \node[shape=circle,draw=black,label=above left: $27$] (27) at (2,5){\textcolor{blue}{1}};
                        \node[shape=circle,draw=black,label=above: $31$] (31) at (3,5){};
                        \node[shape=circle,draw=black] (c21) at (4,5){};

                        \draw[line width=.5mm,black,-] (19) edge[-] (27);
                        \draw[line width=.5mm,black,-] (27) edge[-] (31);
                        \draw[line width=.5mm,black,-] (31) edge[-] (c21);

                        \node[shape=circle,draw=black,label=above left: $20$] (20) at (1,4){};
                        \node[shape=circle,draw=black,label=above left: $21$] (21) at (2,4){};
                        \node[shape=circle,draw=black,label=above left: $28$] (28) at (3,4){\textcolor{blue}{1}};
                        \node[shape=circle,draw=black] (c31) at (4,4){};
                        \node[shape=circle,draw=black] (c32) at (5,4){\textcolor{blue}{2}};
                        \node[shape=circle,draw=black] (c33) at (6,4){};
                        \node[shape=circle,draw=black] (c34) at (7,4){};

                        \draw[line width=.5mm,black,-] (20) edge[-] (21);
                        \draw[line width=.5mm,black,-] (21) edge[-] (28);
                        \draw[line width=.5mm,black,-] (28) edge[-] (c31);
                        \draw[line width=.5mm,black,-] (c31) edge[-] (c32);
                        \draw[line width=.5mm,black,-] (c32) edge[-] (c33);
                        \draw[line width=.5mm,black,-] (c33) edge[-] (c34);

                        \node[shape=circle,draw=black,label=above left: $22$] (22) at (1,3){};
                        \node[shape=circle,draw=black,label=above left: $23$] (23) at (2,3){};
                        \node[shape=circle,draw=black,label=above left: $24$] (24) at (3,3){};
                        \node[shape=circle,draw=black,label=above: $25$] (25) at (4,3){};
                        \node[shape=circle,draw=black] (c41) at (5,3){\textcolor{blue}{1}};
                        \node[shape=circle,draw=black] (c42) at (6,3){};

                        \draw[line width=.5mm,black,-] (22) edge[-] (23);
                        \draw[line width=.5mm,black,-] (23) edge[-] (24);
                        \draw[line width=.5mm,black,-] (24) edge[-] (25);
                        \draw[line width=.5mm,black,-] (25) edge[-] (c41);
                        \draw[line width=.5mm,black,-] (c41) edge[-] (c42);

                        \node[shape=circle,draw=black,label=above left: $29$] (29) at (3,2){\textcolor{blue}{1}};
                        \node[shape=circle,draw=black] (c61) at (4,2){};
                        \node[shape=circle,draw=black] (c62) at (5,2){\textcolor{blue}{2}};
                        \node[shape=circle,draw=black] (c63) at (6,2){};
                        \node[shape=circle,draw=black] (c64) at (7,2){};

                        \draw[line width=.5mm,black,-] (29) edge[-] (c61);
                        \draw[line width=.5mm,black,-] (c61) edge[-] (c62);
                        \draw[line width=.5mm,black,-] (c62) edge[-] (c63);
                        \draw[line width=.5mm,black,-] (c63) edge[-] (c64);

                        \node[shape=circle,draw=black] (c71) at (5,1){\textcolor{blue}{1}};
                        \node[shape=circle,draw=black] (c72) at (6,1){};
                        \node[shape=circle,draw=black] (c73) at (7,1){};

                        \draw[line width=.5mm,black,-] (c71) edge[-] (c72);
                        \draw[line width=.5mm,black,-] (c72) edge[-] (c73);

                    \draw[line width=.5mm,black,-] (26) edge[-] (27);
                    \draw[line width=.5mm,black,-] (28) edge[bend left,-] (29);
                    \draw[line width=.5mm,black,-] (c32) edge[-] (c41);
                    \draw[line width=.5mm,black,-] (c62) edge[-] (c71);

                \draw [rounded corners,line width=.4mm,color=red](.4,2.5) --
                      (.4,2.5)  [rounded corners] -- (.4,5.59)
                      [rounded corners] -- (1.5,5.59) -- (1.5,6.75)
                      [rounded corners] -- (4.5,6.75) -- (4.5,4.5)
                      [rounded corners] -- (3.5,4.5) -- (3.5,3.70)
                      [rounded corners] -- (4.5,3.70) -- (4.5,2.5)
                      [rounded corners] -- (3.5,2.5) -- (3.5,1.5)
                      [rounded corners] -- (2.4,1.5) -- (2.4,2.5) [rounded corners] -- cycle;

                    \draw (1.8,5.59) edge[line width=.5mm,red,-] (2.2,5.59);
                    \draw (3.1,3.5) edge[line width=.5mm,red,-] (3.45,3.5);
                    \draw (4.8,3.5) edge[line width=.5mm,red,-] (5.2,3.5);
                    \draw (4.8,1.5) edge[line width=.5mm,red,-] (5.2,1.5);

        \end{scope}

            \node[] at (9,6){$C_1$};
            \node[] at (9,5){$C_2$};
            \node[] at (9,4){$C_3$};
            \node[] at (9,3){$C_4$};
            \node[] at (9,2){$C_6$};
            \node[] at (9,1){$C_7$};

			\node[] at (9,7){$\emph{j=2}$};

    \end{tikzpicture}

\medskip
\par\noindent\rule{\textwidth}{0.4pt}
\medskip

     \begin{tikzpicture}
        \begin{scope}[auto, every node/.style={draw,circle,minimum size=1em,inner sep=1},node distance=1cm]
                    \color{black}
                    \tikzset{dot/.style={fill=black,circle}}
                    
            \node[shape=circle,draw=black,label=above: $32$] (32) at (1,4){};
            \node[shape=circle,draw=black,label=above: $34$] (34) at (2,4){\textcolor{blue}{1}};
            \node[shape=circle,draw=black,label=above: $38$] (38) at (3,4){};
            \node[shape=circle,draw=black] (c31) at (4,4){};
            
            \draw[line width=.5mm,black,-] (32) edge[-] (34);
            \draw[line width=.5mm,black,-] (34) edge[-] (38);
            \draw[line width=.5mm,black,-] (38) edge[-] (c31);
            
            \node[shape=circle,draw=black,label=above left: $35$] (35) at (2,3){\textcolor{blue}{1}};
            \node[shape=circle,draw=black] (c41) at (3,3){};
            
            \draw[line width=.5mm,black,-] (35) edge[-] (c41);
            
            \node[shape=circle,draw=black,label=above: $33$] (33) at (1,2){};
            \node[shape=circle,draw=black,label=above: $36$] (36) at (2,2){\textcolor{blue}{1}};
            \node[shape=circle,draw=black,label=above: $39$] (39) at (3,2){};
            \node[shape=circle,draw=black] (c61) at (4,2){};
            
            \draw[line width=.5mm,black,-] (33) edge[-] (36);
            \draw[line width=.5mm,black,-] (36) edge[-] (39);
            \draw[line width=.5mm,black,-] (39) edge[-] (c61);
            
            \node[shape=circle,draw=black,label=above left: $37$] (37) at (2,1){\textcolor{blue}{1}};
            \node[shape=circle,draw=black,label=above: $40$] (40) at (3,1){};
            \node[shape=circle,draw=black] (c71) at (4,1){};
            
            \draw[line width=.5mm,black,-] (37) edge[-] (40);
            \draw[line width=.5mm,black,-] (40) edge[-] (c71);
            
            \draw[line width=.5mm,black,-] (34) edge[-] (35);
            \draw[line width=.5mm,black,-] (36) edge[-] (37);     
        
            \draw (1.8,3.59) edge[line width=.5mm,red,-] (2.2,3.59);
            \draw (1.8,1.59) edge[line width=.5mm,red,-] (2.2,1.59);
         
         \end{scope}
         
            \node[] at (6,4){$C_3$};
            \node[] at (6,3){$C_4$};
            \node[] at (6,2){$C_6$};
            \node[] at (6,1){$C_7$};

			\node[] at (6,5){$\emph{j=3}$};         

\node [below=2cm, align=flush center,text width=8cm] at (5,2)
       {
           Forest Labeling Algorithm Example 1
       };

     \end{tikzpicture}

\par\noindent\rule{\textwidth}{0.4pt}
\par\noindent\rule{\textwidth}{0.4pt}

\medskip


     \begin{tikzpicture}
        \begin{scope}[auto, every node/.style={draw,circle,minimum size=1em,inner sep=1},node distance=1cm]
                    \color{black}
                    \tikzset{dot/.style={fill=black,circle}}
                    
            \node[shape=circle,draw=black,label=above: $1$] (1) at (1,5){};
            \node[shape=circle,draw=black,label=above: $3$] (3) at (2,5){};
            \node[shape=circle,draw=black,label=above: $10$] (10) at (3,5){};
			 \node[shape=circle,draw=black,label=above: $11$] (11) at (4,5){};
			 \node[shape=circle,draw=black,label=above: $2$] (2) at (1,4){};
			 \node[shape=circle,draw=black,label=left: $8$] (8) at (3,4){\textcolor{blue}{1}};

            \node[shape=circle,draw=black] (c11) at (5,5){};
            
            \draw[line width=.5mm,black,-] (1) edge[-] (3);
			 \draw[line width=.5mm,black,-](3) edge[-] (10);
 			 \draw[line width=.5mm,black,-] (10) edge[-] (11);
			 \draw[line width=.5mm,black,-] (11) edge[-] (c11);
 			 \draw[line width=.5mm,black,-] (3) edge[-] (2);
 			 \draw[line width=.5mm,black,-] (10) edge[-] (8);
                      
			 \node[shape=circle,draw=black,label=above: $4$] (4) at (1,3){};
			 \node[shape=circle,draw=black,label=above: $6$] (6) at (2,3){};
            \node[shape=circle,draw=black,label=above left: $9$] (9) at (3,3){\textcolor{blue}{1}};
			 \node[shape=circle,draw=black,label=above: $5$] (5) at (1,2){};

            \node[shape=circle,draw=black] (c12) at (4,3){};
            \node[shape=circle,draw=black] (c13) at (5,3){};
            \node[shape=circle,draw=black] (c14) at (6,3){};
            \node[shape=circle,draw=black] (c15) at (3,2){\textcolor{blue}{2}};
            \node[shape=circle,draw=black] (c16) at (4,2){};
            
            \draw[line width=.5mm,black,-] (4) edge[-] (6);
            \draw[line width=.5mm,black,-] (6) edge[-] (9);
            \draw[line width=.5mm,black,-] (9) edge[-] (c12);
            \draw[line width=.5mm,black,-] (c12) edge[-] (c13);
            \draw[line width=.5mm,black,-] (c13) edge[-] (c14);
            \draw[line width=.5mm,black,-] (6) edge[-] (5);
            \draw[line width=.5mm,black,-] (c12) edge[-] (c15);
            \draw[line width=.5mm,black,-] (c13) edge[-] (c16);
            
            \node[shape=circle,draw=black,label=above: $7$] (7) at (2,1){};

            \node[shape=circle,draw=black] (c17) at (3,1){\textcolor{blue}{1}};
            \node[shape=circle,draw=black] (c18) at (4,1){};
            
            \draw[line width=.5mm,black,-] (7) edge[-] (c17);
            \draw[line width=.5mm,black,-] (c17) edge[-] (c18);

            \draw[line width=.5mm,black,-] (9) edge[-] (8);
            \draw[line width=.5mm,black,-] (c15) edge[-] (c17);

            \draw (2.8,3.59) edge[line width=.5mm,red,-] (3.2,3.59);
            \draw (2.8,1.59) edge[line width=.5mm,red,-] (3.2,1.59);
         
         \end{scope}
         
            \node[] at (7,5){$C_1$};
            \node[] at (7,3){$C_2$};
            \node[] at (7,1){$C_3$};

			\node[] at (7,6){$\emph{j=1}$};         

                \draw [rounded corners,line width=.4mm,color=red](.4,.5) --  (.4,5.75)
                      [rounded corners] -- (5.5,5.75) -- (5.5,4.5)
                      [rounded corners] -- (3.5,4.5) -- (3.5,2.5)
                      [rounded corners] -- (2.5,2.5) -- (2.5,.5)
                      [rounded corners] -- (.4,.5) [rounded corners] -- cycle;

     \end{tikzpicture}

\medskip
\par\noindent\rule{\textwidth}{0.4pt}
\medskip

     \begin{tikzpicture}
        \begin{scope}[auto, every node/.style={draw,circle,minimum size=1em,inner sep=1},node distance=1cm]
                    \color{black}
                    \tikzset{dot/.style={fill=black,circle}}
                    
            \node[shape=circle,draw=black,label=above: $14$] (14) at (2,3){};
            \node[shape=circle,draw=black,label=above: $16$] (16) at (3,3){};
            \node[shape=circle,draw=black,label=above: $12$] (12) at (1,2){\textcolor{blue}{1}};
            \node[shape=circle,draw=black,label=above: $15$] (15) at (2,2){};

            \node[shape=circle,draw=black] (c21) at (4,3){};

            \draw[line width=.5mm,black,-] (14) edge[-] (16);
            \draw[line width=.5mm,black,-] (16) edge[-] (c21);
            \draw[line width=.5mm,black,-] (14) edge[-] (12);
            \draw[line width=.5mm,black,-] (15) edge[-] (16);

            \node[shape=circle,draw=black,label=above left: $13$] (13) at (1,1){\textcolor{blue}{1}};

            \node[shape=circle,draw=black] (c22) at (2,1){};

            \draw[line width=.5mm,black,-] (13) edge[-] (c22);

            \draw[line width=.5mm,black,-] (12) edge[-] (13);

            \draw (.8,1.59) edge[line width=.5mm,red,-] (1.2,1.59);

         \end{scope}

            \node[] at (5,3){$C_2$};
            \node[] at (5,1){$C_3$};

			\node[] at (5,4){$\emph{j=2}$};         

\node [below=2cm, align=flush center,text width=8cm] at (5,2)
       {
           Forest Labeling Algorithm Example 2
       };

     \end{tikzpicture}

\par\noindent\rule{\textwidth}{0.4pt}

\end{center}

\section{Grundy domination of strong products of graphs}

We first note that to solve Conjecture \ref{conj}, we may assume that $G$ and $H$ are connected graphs, since strong products of graphs have components corresponding to the components of the factor graphs.

To prove our main result, we need to introduce some notation and an essential lemma.

For any graphs $G$ and $H$, choose a maximum legal sequence $D$ of $G\boxtimes H$. For some $v\in V(G)$ let $D_v=\widehat{D}\cap (\{v\}\times H)$ and $F_v$ be the set of vertices of $\widehat{D}$ which have a footprint in $H^{v}$. More specifically, for any $u\in N[v]$, let $D_u(F_v)$ be the set of vertices in $D_u$ which have a footprint in $H^v$.

\begin{lem}\label{footint}
For any graphs $G$ and $H$, and any $v\in V(G)$, $|F_v|\le \gamma_{gr}(H)$.
\end{lem}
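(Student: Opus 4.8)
The plan is to injectively encode the vertices of $F_v$ as footprinted vertices of a legal sequence of $H$ obtained by projecting onto the second coordinate. The one computation I need is a rigidity property of fibers: for every $u\in N_G[v]$ and every $h\in V(H)$,
\[ N_{G\boxtimes H}[(u,h)]\cap H^v=\{v\}\times N_H[h]. \]
This is immediate from the definition of the strong product: whether $u=v$ or $u$ is adjacent to $v$ in $G$, a vertex $(v,h')$ equals or is adjacent to $(u,h)$ exactly when $h'\in N_H[h]$. In particular, every vertex of $F_v$ has the form $(u,h)$ with $u\in N_G[v]$, since by definition it equals or is adjacent to some vertex of $H^v$.

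Step one: I would show that no two vertices of $F_v$ share a second coordinate. If $w=(u,h)$ and $w'=(u',h)$ both lie in $F_v$ and $w$ precedes $w'$ in $D$, then by the displayed identity $N[w]\cap H^v=N[w']\cap H^v=\{v\}\times N_H[h]$; but all of $N[w]\cap H^v$ is already dominated once $w$ is added to $D$, hence before $w'$ is added, so the footprint of $w'$ cannot meet $H^v$ — contradicting $w'\in F_v$.

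Step two: write $F_v=(w_1,\dots,w_k)$ in the order the vertices occur in $D$, say $w_i=(u_i,h_i)$; by Step one the $h_i$ are pairwise distinct. I claim $(h_1,\dots,h_k)$ is a legal sequence of $H$, which yields $|F_v|=k\le\gamma_{gr}(H)$. For each $i$ pick $(v,g_i)$ in the footprint of $w_i$ that lies in $H^v$ (possible since $w_i\in F_v$); by the displayed identity $g_i\in N_H[h_i]$. If $g_i\in N_H[h_j]$ for some $j<i$, then $(v,g_i)\in\{v\}\times N_H[h_j]=N[w_j]\cap H^v$, so $(v,g_i)$ is dominated in $D$ at the step of $w_j$, which is strictly earlier than the step of $w_i$; this contradicts $(v,g_i)$ lying in the footprint of $w_i$. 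Hence $g_i\in N_H[h_i]\setminus\bigcup_{j<i}N_H[h_j]$, which establishes legality.

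The only real obstacle is choosing the right projection. The naive projection of $\widehat D\cap H^v$ onto $H$ (as in the proof of Proposition \ref{simplicial}) ignores the footprinters of $H^v$-vertices that live off the fiber $H^v$, while projecting the footprinted vertices of $H^v$ only gives $|F_v|\le|V(H)|$. Projecting the footprinters themselves by their second coordinate is what works, and the rigidity identity above is precisely what makes that projection simultaneously injective (Step one) and legal (Step two).
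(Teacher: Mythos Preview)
Your proof is correct and follows the same approach as the paper: project the footprinters of $H^v$ onto $H$ via the second coordinate and argue that the resulting sequence is legal in $H$, using the key observation that $(u,h)$ and $(v,h)$ have the same closed neighborhood inside $H^v$ whenever $u\in N_G[v]$. Your write-up is in fact more careful than the paper's, since you explicitly prove the injectivity of the projection (Step one), a point the paper leaves implicit.
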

\begin{proof}
Let $D$ be a maximum legal sequence of $G\boxtimes H$. Say $N_G(v)=\{u_1,\dots, u_k\}$. Notice that $|F_v|=|D_{u_1}(F_v) \cup \dots \cup D_{u_k}(F_v) \cup D_v(F_v)|$. Choose any vertex $x\in F_v$ and without loss of generality, suppose $x=(u_1,h)$ for $h\in V(H)$. Notice that $x$ has the same footprint in $H^v$ as $(v,h)$. This means that we may project $D_{u_1}(F_v)\cup \dots \cup D_{u_k}(F_v) \cup D_v(F_v)$ onto $H$ and produce a legal set of $H$. Such a legal set has size at most $\gamma_{gr}(H)$ and the lemma is proven. 
\end{proof}

\begin{thm}\label{main}
For any tree $G$ and graph $H$,
\[\gamma_{gr}(G \boxtimes H) = \gamma_{gr}(G)\gamma_{gr}(H).\]
\end{thm}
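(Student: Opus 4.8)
It suffices to prove the reverse inequality $\gamma_{gr}(G\boxtimes H)\le\gamma_{gr}(G)\gamma_{gr}(H)$, since the other direction is \eqref{ineq1}. As the strong product of disconnected graphs splits into the products of its components, we may assume $G$ and $H$ connected. I would induct on $\ell:=|\cP(G)|$, the number of caterpillars in a minimum caterpillar partition of $G$. When $\ell=1$, $G$ is a caterpillar and the equality is the already known case of Conjecture \ref{conj} (from \cite{BBGKKPTV1}); this is the base case.

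Assume $\ell\ge 2$ and fix a minimum caterpillar partition $\cP=\{C_1,\dots,C_\ell\}$. By Observation \ref{canopy forest} the canopy graph of $(G,\cP)$ is a tree, hence has a leaf; let $C\in\cP$ be the corresponding leaf caterpillar, $v$ its unique branch vertex, $vx$ its branch edge, and put $G'=G-V(C)$ (a tree). Deleting $C$ from $\cP$ gives a valid caterpillar partition of $G'$ of size $\ell-1$, and using Theorem \ref{forest} together with minimality of $\cP$ one checks that none is smaller, so $|\cP(G')|=\ell-1$. Since the Grundy domination number of a forest is $|V|-|\cP|$ and $\gamma_{gr}(C)=|V(C)|-1$ for a caterpillar, this gives
\[\gamma_{gr}(G)=\gamma_{gr}(C)+\gamma_{gr}(G').\]

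The crux is the additive estimate $\gamma_{gr}(G\boxtimes H)\le\gamma_{gr}(C)\gamma_{gr}(H)+\gamma_{gr}(G'\boxtimes H)$. Granting it, the caterpillar case applied to $C$ gives $\gamma_{gr}(C)\gamma_{gr}(H)=\gamma_{gr}(C\boxtimes H)$, and the inductive hypothesis (valid since $|\cP(G')|=\ell-1<\ell$) gives $\gamma_{gr}(G'\boxtimes H)=\gamma_{gr}(G')\gamma_{gr}(H)$, so $\gamma_{gr}(G\boxtimes H)\le(\gamma_{gr}(C)+\gamma_{gr}(G'))\gamma_{gr}(H)=\gamma_{gr}(G)\gamma_{gr}(H)$, closing the induction. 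To prove the estimate, fix a longest legal sequence $D$ of $G\boxtimes H$, chosen extremally among all such (in the spirit of the proof of Proposition \ref{simplicial}). The only edges of $G\boxtimes H$ joining the slab $V(C)\times V(H)$ to the slab $V(G')\times V(H)$ run between the fibers $H^{v}$ and $H^{x}$; hence the restriction of $D$ to $V(C)\times V(H)$ is a legal sequence of $C\boxtimes H$ except at the vertices of $\widehat{D}$ in $H^{v}$ whose entire footprint lies in $H^{x}$, and symmetrically on the $G'$ side. Lemma \ref{footint} bounds each of these two ``crossing'' sets by $\gamma_{gr}(H)$, since each projects injectively onto a legal sequence of $H$. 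The remaining work is to show that, using the extremal choice of $D$ and condition (2) of a minimum caterpillar partition (so that $v$ is a non-leaf of $C$, or $x$ is a non-leaf of its caterpillar), the crossing vertices can be re-attached to the correct side — replacing a crossing vertex $(v,h)$ by $(x,h)$, and dually — without destroying legality, so that the two restricted sequences have combined length at most $\gamma_{gr}(C)\gamma_{gr}(H)+\gamma_{gr}(G'\boxtimes H)$.

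The main obstacle is precisely this absorption of the crossing sets: a naive slab split overcounts by a full $\gamma_{gr}(H)$. Indeed, even iterating the simplicial reduction of Proposition \ref{simplicial} to strip $V(C)$ off of $G$ only yields $\gamma_{gr}(C)\gamma_{gr}(H)+\gamma_{gr}(G''\boxtimes H)$, where $G''$ is $G'$ with a pendant copy of $v$ re-attached at $x$, and $\gamma_{gr}(G'')$ can equal $\gamma_{gr}(G')+1$; the spider $G=S(2,2,2)$ with three legs of length two is a concrete witness, where every such reduction stalls at $6\gamma_{gr}(H)$ while $\gamma_{gr}(G)\gamma_{gr}(H)=5\gamma_{gr}(H)$. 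Recovering the missing $\gamma_{gr}(H)$ is exactly the purpose of Lemma \ref{footint} and the non-leaf condition (2) on branch edges, and the delicate point will be verifying that the re-routing of the crossing vertices preserves legality of the sequence.
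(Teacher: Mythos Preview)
Your plan correctly isolates the heart of the matter --- the additive estimate
\[
\gamma_{gr}(G\boxtimes H)\le\gamma_{gr}(C)\gamma_{gr}(H)+\gamma_{gr}(G'\boxtimes H)
\]
for a leaf caterpillar $C$ --- but you do not actually prove it. You write ``the remaining work is to show that \ldots\ the crossing vertices can be re-attached to the correct side,'' and you flag that ``the delicate point will be verifying that the re-routing \ldots\ preserves legality.'' That delicate point \emph{is} the theorem: the re-routing $(v,h)\mapsto (x,h)$ you propose is precisely the simplicial replacement of Proposition~\ref{simplicial}, and it works only when $v$ is simplicial; here $v$ is the branch vertex of $C$, which has non-adjacent neighbours on both sides, so the substitution can collide with a vertex already in $D$ or destroy the footprint of a later entry. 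Your own example $S(2,2,2)$ already shows that naive slab splitting loses a full $\gamma_{gr}(H)$, and nothing in your sketch explains how condition~(2) of the caterpillar partition recovers it for an \emph{arbitrary} leaf caterpillar $C$.

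The paper's proof avoids this obstacle by inducting on $|V(G)|$ rather than on $|\cP|$. If some leaf can be removed without changing $|\cP|$, Proposition~\ref{simplicial} applies directly and closes the induction. Otherwise $G$ is caterpillar-critical, and Lemmas~\ref{cc} and~\ref{ccleaf} force every leaf caterpillar to be $P_5$ with central branch vertex, or $P_2$ attached to such a $P_5$. In those two rigid configurations the branch vertex $v_3$ has genuine leaves $v_1,v_5$ (and possibly $u_1$) at distance~$2$; choosing $D$ extremally on the corresponding fibers forces every vertex of $D_{v_2},D_{v_4}$ (and $D_{u_2}$) to footprint only inside $H^{v_3}$ (Claims~\ref{footC}/\ref{footCC}), after which Lemma~\ref{footint} bundles them all into a single $\gamma_{gr}(H)$. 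The structural reduction to $P_5$ is what makes the extremal argument go through; without it, your re-routing step is a restatement of the goal, not a proof.
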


\begin{proof}
By inequality \eqref{ineq1}, it suffices to show that \[\gamma_{gr}(G \boxtimes H) \le \gamma_{gr}(G)\gamma_{gr}(H).\]

We induct on the order $n$ of $G$. The statement is trivial for $n=1$. Since every tree contains at least two leaf vertices, and leaves are simplicial, we may choose a leaf $v$ and consider the tree $T=G-\{v\}$. Notice that if $\cP$ is a minimum caterpillar partition for $G$ and $\cP'$ is a minimum caterpillar partition for $T$, then $|\cP|-1\le |\cP'|\le |\cP|$. If $|\cP'|= |\cP|$, we may apply Proposition \ref{simplicial}, the inductive hypothesis, and Theorem \ref{forest} to show

\begin{align*}
\gamma_{gr}(G \boxtimes H) & \le \gamma_{gr}(H) + \gamma_{gr}\Big{(}(G-\{v\})\boxtimes H\Big{)}\\
& \le \gamma_{gr}(H) + \gamma_{gr}(G-\{v\})\gamma_{gr}(H)\\
& \le \gamma_{gr}(H) + \Big{(}(n-1) - |\cP'|\Big{)}\gamma_{gr}(H)\\
& \le (n - |\cP'|) \gamma_{gr}(H)= (n - |\cP|) \gamma_{gr}(H)\\
& = \gamma_{gr}(G)\gamma_{gr}(H).
\end{align*}

We may now suppose that for any leaf $v$, $|\cP'|=|\cP|-1$, which means that $G$ is caterpillar-critical. By Lemma \ref{cc}, we may have chosen $\cP$ as a minimum caterpillar partition of $G$ so that $P_5=v_1-v_2-v_3-v_4-v_5$ is a leaf caterpillar in $\cP$ or so that $P_2=u_1-u_2$ is a leaf caterpillar. First suppose we chose such a minimum partition $\cP$ with leaf caterpillar $C=P_5$. Let $v_6$ be the neighbor of $v_3$ which is not on $C$.

Let $D'$ be a maximum legal sequence of $G \boxtimes H$ with the maximum number of entries from $\{v_1\}\times H$. For all such sequences $D'$, let $D$ be one with the maximum number of entries from $\{v_5\}\times H$.

\begin{claim}\label{footC}
Every vertex $v\in D\cap(\{v_2\}\times H)$ must only footprint vertices of $\{v_3\}\times H$ and every vertex $v'\in D\cap(\{v_4\}\times H)$ must only footprint vertices of $\{v_3\}\times H$.
\end{claim}
\begin{proof}
If $v$ footprints $x\in \{v_1\}\times H$, then say $v=(v_2,h)$, notice that $(v_1,h)$ footprints $x$ and may replace $v$, and we still retain a maximum legal sequence of $G\boxtimes H$. However, now the resulting sequence contains more vertices in $\{v_1\}\times H$ than $D$ which contradicts the choice of $D$. 

Next, suppose $(v_2,h)$ footprints $(v_2,h')$ for some $h$ and $h'$ in $V(H)$. Since $\widehat{D}$ is a legal set and dominates $\{v_1\}\times H$, some vertex $y$ of $\widehat{D}$ must dominate $(v_1,h')$, and thus, $y$ dominates $(v_2,h')$. This means that either $y$ or $(v_2,h)$ footprint both $(v_2,h')$ and $(v_1,h')$. 

The symmetric argument applies to $v'$.
\end{proof}

Using our notation from this section, we note that by Lemma \ref{footint}, 
\begin{align}\label{lasteq1}
|D_{v_2}(F_{v_3})|+|D_{v_3}(F_{v_3})|+|D_{v_4}(F_{v_3})|+|D_{v_6}(F_{v_3})|=|F_{v_3}|\le \gamma_{gr}(H).
\end{align}

By Claim \ref{footC},
\begin{align}\label{lasteq2}
D_{v_1}=F_{v_1}, \, D_{v_5}=F_{v_5}, \, D_{v_2}=D_{v_2}(F_{v_3}),\, D_{v_4}=D_{v_4}(F_{v_3}).
\end{align}

We apply inequality \eqref{lasteq1} and identities \eqref{lasteq2} to show

\begin{align*}
|\widehat{D}|&= |D_{v_1}\cup D_{v_2} \cup D_{v_3} \cup D_{v_4} \cup D_{v_5}| + |D\cap ((G-C)\boxtimes H)|\\
&\le |D_{v_1}|+|D_{v_5}|+|D_{v_2}(F_{v_3})|+|D_{v_4}(F_{v_3})|+|D_{v_3}|+|D_{v_6}(F_{v_3})|\\
&+|D\cap ((G-C)\boxtimes H)|\\
&= |D_{v_1}|+|D_{v_5}|+\Big{[}|D_{v_2}(F_{v_3})|+|D_{v_4}(F_{v_3})|+|D_{v_6}(F_{v_3})|\\
&+|D_{v_3}(F_{v_3})|\Big{]}+|D_{v_3}-D_{v_3}(F_{v_3})|+|D\cap ((G-C)\boxtimes H)|\\
&\le 4\gamma_{gr}(H)+\gamma_{gr}((G-C)\boxtimes H)\\
&\mbox{(which we bound by applying the inductive hypothesis to $G-C$)}\\
&\le 4\gamma_{gr}(H)+\gamma_{gr}((G-C))\gamma_{gr}(H)\\
&\mbox{(and applying Theorem \ref{forest})}\\
&=4\gamma_{gr}(H)+(|V(G)|-5-|\cP|+1)\gamma_{gr}(H)\\
&=(|V(G)|-|\cP|)\gamma_{gr}(H) = \gamma_{gr}(G)\gamma_{gr}(H)
\end{align*}

We must now consider the case where no $P_5$ leaf caterpillar exists and $\cP$ is chosen as a minimum caterpillar partition of $G$ so that $C=P_2=u_1-u_2$ is a leaf caterpillar. By Lemma \ref{ccleaf}, we may assume that $C_1=P_5=v_1-v_2-v_3-v_4-v_5$ is a neighbor of $C$ with $u_2$ and $v_3$ as branch vertices. Let $v_6$ be the neighbor of $v_3$ which is not on $C$ or $C_1$. 

Let $D''$ be a maximum legal sequence of $G\boxtimes H$ with the maximum number of entries from $\{v_1\}\times H$. For all such sequences $D''$, let $D'$ be one with the maximum number of entries in $\{v_5\}\times H$, and for all such sequences $D'$, let $D$ be the one with the maximum number of entries in $\{u_1\}\times H$.

\begin{claim}\label{footCC}
Every vertex $v\in D\cap(\{v_2\}\times H)$ must only footprint vertices of $\{v_3\}\times H$, $v'\in D\cap(\{v_4\}\times H)$ must only footprint vertices of $\{v_3\}\times H$, and $v''\in D\cap(\{u_2\}\times H)$ must only footprint vertices of $\{v_3\}\times H$.
\end{claim}

The proof of Claim \ref{footCC} is identical to that of Claim \ref{footC} with the argument for $v$ repeated for $u_2$.

\medskip

By Lemma \ref{footint},
\begin{align}\label{reallylasteq1}
&|D_{v_2}(F_{v_3})|+|D_{v_3}(F_{v_3})|+|D_{v_4}(F_{v_3})|+|D_{u_2}(F_{v_3}|+|D_{v_6}(F_{v_3})|\\
&=|F_{v_3}|\le \gamma_{gr}(H).\nonumber
\end{align}

By Claim \ref{footCC}
\begin{align}\label{reallylasteq2}
D_{v_1}=F_{v_1},\, D_{v_5}=F_{v_5},\, D_{v_2}=D_{v_2}(F_{v_3}),\, D_{v_4}=D_{v_4}(F_{v_3}),\, D_{u_2}=D_{u_2}(F_{v_3}).
\end{align}

We apply inequality \eqref{reallylasteq1} and identities \eqref{reallylasteq2} to show

\begin{align*}
|\widehat{D}|&= |D_{u_1}\cup D_{u_2}\cup D_{v_1}\cup D_{v_2} \cup D_{v_3} \cup D_{v_4} \cup D_{v_5}| + |D\cap ((G-C-C_1)\boxtimes H)|\\
&\le |D_{u_1}|+|D_{v_1}|+|D_{v_5}|+|D_{u_2}(F_{v_3})|+|D_{v_2}(F_{v_3})|+|D_{v_4}(F_{v_3})|+|D_{v_3}|\\
&+|D_{v_6}(F_{v_3})|+|D\cap ((G-C-C_1)\boxtimes H)|\\
&= |D_{u_1}|+|D_{v_1}|+|D_{v_5}|+\Big{[}|D_{u_2}(F_{v_3})|+|D_{v_2}(F_{v_3})|+|D_{v_4}(F_{v_3})|+|D_{v_6}(F_{v_3})|\\
&+|D_{v_3}(F_{v_3})|\Big{]}+|D_{v_3}-D_{v_3}(F_{v_3})|+|D\cap ((G-C-C_1)\boxtimes H)|\\
&\le 5\gamma_{gr}(H)+\gamma_{gr}((G-C-C_1)\boxtimes H)\\
&\mbox{(which we bound by applying the inductive hypothesis to $G-C-C_1$)}\\
&\le 5\gamma_{gr}(H)+\gamma_{gr}((G-C-C_1))\gamma_{gr}(H)\\
&\mbox{(and applying Theorem \ref{forest})}\\
&=5\gamma_{gr}(H)+(|V(G)|-7-|\cP|+2)\gamma_{gr}(H)\\
&=(|V(G)|-|\cP|)\gamma_{gr}(H) = \gamma_{gr}(G)\gamma_{gr}(H)
\end{align*}

\end{proof}

\begin{cor}
For any forest $G$ and graph $H$,
\[\gamma_{gr}(G \boxtimes H) = \gamma_{gr}(G)\gamma_{gr}(H).\]
\end{cor}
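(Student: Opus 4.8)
The plan is to reduce immediately to the connected case already settled by Theorem~\ref{main}. As observed at the start of this section, the strong product is compatible with taking connected components, so the first step is to make that precise. Write the forest $G$ as the disjoint union of its tree components $G_1,\dots,G_k$ (allowing components that are single vertices). Since an edge of $G\boxtimes H$ joining $(g_1,h_1)$ to $(g_2,h_2)$ forces, by the definition of the strong product, either $g_1=g_2$ or $g_1g_2\in E(G)$, there can be no edge of $G\boxtimes H$ between $V(G_i)\times V(H)$ and $V(G_j)\times V(H)$ for $i\ne j$. Hence $G\boxtimes H$ is the disjoint union of the induced subgraphs $G_i\boxtimes H$.

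The second step is the elementary observation that $\gamma_{gr}$ is additive over disjoint unions: if a graph $K$ is the disjoint union of $K_1,\dots,K_k$, then $\gamma_{gr}(K)=\sum_{i}\gamma_{gr}(K_i)$. Indeed, in a legal sequence of $K$ a vertex lying in $K_i$ can only footprint vertices of $K_i$, so restricting a legal sequence of $K$ to each $K_i$ yields legal sequences of the $K_i$ whose lengths sum to that of the original; conversely, concatenating legal sequences of the $K_i$ in any order produces a legal sequence of $K$. Applying this to $G$ gives $\gamma_{gr}(G)=\sum_i\gamma_{gr}(G_i)$, and applying it to $G\boxtimes H$ gives $\gamma_{gr}(G\boxtimes H)=\sum_i\gamma_{gr}(G_i\boxtimes H)$.

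The third and final step is to combine these facts with Theorem~\ref{main}. Each $G_i$ is a tree, so $\gamma_{gr}(G_i\boxtimes H)=\gamma_{gr}(G_i)\gamma_{gr}(H)$, and therefore
\[
\gamma_{gr}(G\boxtimes H)=\sum_{i=1}^{k}\gamma_{gr}(G_i\boxtimes H)=\sum_{i=1}^{k}\gamma_{gr}(G_i)\gamma_{gr}(H)=\Bigl(\sum_{i=1}^{k}\gamma_{gr}(G_i)\Bigr)\gamma_{gr}(H)=\gamma_{gr}(G)\gamma_{gr}(H).
\]
There is no real obstacle here: the only points requiring any care are the verification that no edge of $G\boxtimes H$ crosses between the blocks $V(G_i)\times V(H)$ and the additivity of $\gamma_{gr}$ over connected components, both of which follow immediately from the definitions (and the latter may alternatively be cited as a known fact). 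The substantive content is entirely contained in Theorem~\ref{main}, hence ultimately in the formula of Theorem~\ref{forest}.
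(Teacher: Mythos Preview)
Your proof is correct and is precisely the argument the paper has in mind: the corollary is stated without proof, but the opening remark of Section~4 (that one may assume $G$ and $H$ connected because strong products split over components) is exactly the reduction you carry out, after which Theorem~\ref{main} applies to each tree component $G_i$.
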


\section{Some results about Grundy dominating sets}

\begin{thm}\label{spanningtree}
For any connected graph $G$, there exists a spanning tree $T$ of $G$ so that $\gamma_{gr}(G)\le \gamma_{gr}(T)$.
\end{thm}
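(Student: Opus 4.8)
The plan is to take a maximum legal sequence $S=(v_1,\dots,v_k)$ of $G$, where $k=\gamma_{gr}(G)$, and to produce a spanning tree $T$ of $G$ for which $S$ is still a legal sequence; this immediately gives $\gamma_{gr}(T)\ge k=\gamma_{gr}(G)$. For each $i\in[k]$ I would fix one vertex $w_i$ footprinted by $v_i$, that is, $w_i\in N_G[v_i]-\bigcup_{j<i}N_G[v_j]$. The central claim is that the edge set $E'=\{v_iw_i: w_i\neq v_i\}$ is acyclic. Granting this, I would extend $E'$ to a spanning tree $T$ of $G$ (possible because $G$ is connected), and then check legality of $S$ in $T$: for each $i$ we have $w_i\in N_T[v_i]$ (either $w_i=v_i$, or $v_iw_i$ is an edge of $T$), whereas for every $j<i$ we have $w_i\notin N_G[v_j]\supseteq N_T[v_j]$; hence $w_i\in N_T[v_i]-\bigcup_{j<i}N_T[v_j]$, so this set is nonempty and $S$ is legal in $T$.

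To prove that $E'$ is acyclic I would first record two injectivity facts. A legal sequence has pairwise distinct terms, so $i\mapsto v_i$ is injective; and since the footprints $N_G[v_i]-\bigcup_{j<i}N_G[v_j]$ over $i\in[k]$ are pairwise disjoint, the map $i\mapsto w_i$ is injective as well. Now orient each edge of $E'$ as $v_i\to w_i$. By the two injectivity facts every vertex of $G$ has out-degree at most $1$ and in-degree at most $1$ in this digraph, so each connected component of $E'$ is a directed path or a directed cycle. It remains to exclude a directed cycle $v_{i_1}\to v_{i_2}\to\cdots\to v_{i_m}\to v_{i_1}$ with $m\ge 2$: letting $i_s$ be the (strict) minimum among the distinct indices $i_1,\dots,i_m$ and looking at the arc entering $v_{i_s}$, say from $v_{i_{s-1}}$, we have $w_{i_{s-1}}=v_{i_s}$ and $i_s<i_{s-1}$, so $w_{i_{s-1}}$ must avoid $N_G[v_j]$ for all $j<i_{s-1}$; taking $j=i_s$ yields $v_{i_s}\notin N_G[v_{i_s}]$, which is absurd.

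The main obstacle, and the part I would be most careful with, is exactly this acyclicity argument: it uses in an essential way both the freedom to pick a single footprinted vertex $w_i$ per term and the order-dependent exclusion built into the definition of a footprint, which together force the ``footprinter-to-footprint'' digraph to be a disjoint union of directed paths. Everything after that is routine, relying only on the monotonicity $N_T[v]\subseteq N_G[v]$ under edge deletion; in particular the degenerate case $w_i=v_i$ (a term that footprints itself) contributes no edge to $E'$ and causes no difficulty, since $v_i\notin\bigcup_{j<i}N_T[v_j]$ continues to hold in $T$.
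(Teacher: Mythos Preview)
Your proof is correct and takes a genuinely different route from the paper's. The paper argues by induction on the number of cycles: fixing a Grundy dominating sequence $S$ of $G$ and a cycle $C$, it shows that for some edge $e$ of $C$ one has $\gamma_{gr}(G-e)\ge\gamma_{gr}(G)$, by observing that if removing every edge of $C$ broke $S$ then each $x_i$ would have to footprint only $x_{i+1}$ all the way around, contradicting the fact that the first-chosen vertex of $C$ cannot be footprinted by a later one. Iterating destroys one cycle at a time until a tree remains.

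Your argument is more direct: you build the tree in one shot by keeping the ``witness'' edges $v_iw_i$ and showing they already form a forest. The key digraph observation---that the injectivity of $i\mapsto v_i$ and of $i\mapsto w_i$ forces in- and out-degree at most~$1$, so any undirected cycle would be a directed cycle, which is then killed by looking at the minimum index---is clean and avoids the edge-by-edge induction entirely. Your approach also yields the slightly sharper conclusion that the \emph{same} sequence $S$ remains legal in $T$; the paper's inductive step re-chooses a maximum legal sequence at each stage and does not track this. One small point you leave implicit but which your $m\ge 2$ case already covers: the orientation $v_i\to w_i$ is well-defined on $E'$, since a single undirected edge arising from two different indices would itself be a directed $2$-cycle and is ruled out by the same minimum-index argument.
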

\begin{proof}
We induct on the number of cycles in $G$. Notice that the theorem is trivially true if $G$ is a tree. Assume that $G$ contains $m$ cycles and that the theorem holds for every graph with fewer cycles. Let $C$ be a cycle on vertices $x_1,\dots, x_k$ so that consecutive vertices in the list are adjacent as are $x_1$ and $x_k$. Call the edge of $C$ from $x_i$ to $x_{i+1}$, $e_i$, for $i \in [k-1]$, and the edge from $x_k$ to $x_1$, $e_k$.

We will now show that for some $i\in[k]$, $\gamma_{gr}(G)\le \gamma_{gr}(G-e_i)$, completing the proof.

Let $S$ be a maximal legal set for $G$. Notice that for any edge $e=xy$ of $C$, $\gamma_{gr}(G-e)<\gamma_{gr}(G)$ only if either $x$ footprints only $y$, or $y$ footprints only $x$, in $S$. Indeed, if $x$ does not footprint $y$ and $y$ does not footprint $x$, then $S$ is a legal sequence in $G-e$ which means that $\gamma_{gr}(G)\le \gamma_{gr}(G-e)$. Furthermore, if $x$ footprints $y$ as well as some other vertex $z$, then $y$ cannot footprint $x$ and notice that removing $e$ allows the selection of every vertex of $S$ for a legal sequence $S'$ of $G-e$ to which we may add $y$ if it is not dominated. Again, this means that $\gamma_{gr}(G)\le \gamma_{gr}(G-e)$. The argument is identical if $y$ footprints $x$ and another vertex.

Note further that two vertices may not footprint the same vertex, by definition of a legal sequence. We now consider whether the removal of any $e_i$ results in the statement of the claim. Suppose the claim is false. Without loss of generality, suppose $x_1$ footprints only $x_2$. This means that $x_2$ must footprint only $x_3$, and following this chain of reasoning, $x_i$ must footprint only $x_{i+1}$ for every $i\in [k-1]$. Finally, $x_k$ must footprint only $x_1$. This leads to a contradiction, because some vertex from the cycle must be chosen first from the vertices of $C$ for $S$, and thus cannot be footprinted by any other vertex in the cycle.
\end{proof}

Originally, we hoped that Theorem \ref{spanningtree}, together with Theorem \ref{main}, would allow us to induct on the number of cycles in $G$ to solve (or at least make progress on) the strong product conjecture. However, at this time, we do not know the right incantation to conjure that spell.

\begin{thm}\label{totaldom}
For any connected non-complete graph $G$ with at least one edge, there exists a Grundy dominating set $\widehat{S}$ of $G$, so that the induced subgraph of $\widehat{S}$ in $G$ contains no isolated vertices.
\end{thm}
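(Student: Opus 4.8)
The plan is a minimal-counterexample argument on a maximum legal sequence, combined with a local surgery that deletes an isolated vertex of the induced subgraph and re-dominates its neighbourhood. First I would record that $\gamma_{gr}(G)\ge 2$: since $G$ is connected and non-complete it contains two non-adjacent vertices, and a vertex not dominating one of them, followed by a vertex it misses, is a legal sequence of length $2$; hence every Grundy dominating set of $G$ has at least two vertices. (This is exactly where the hypotheses enter: $\gamma_{gr}(K_n)=1$, and the only Grundy dominating sets of $K_n$ are singletons, which are isolated in their own induced subgraphs.) Now among all maximum legal sequences $S=(v_1,\dots,v_k)$ of $G$ choose one for which $G[\widehat S]$ has as few isolated vertices as possible, and suppose for contradiction that this number is positive; fix an isolated vertex $v=v_i$. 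Since $v$ has no neighbour in $\widehat S$, no earlier term $v_j$ ($j<i$) is adjacent to $v$, so $v$ lies in its own footprint $N[v_i]-\bigcup_{j<i}N[v_j]$.

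Call a neighbour $u$ of $v$ \emph{private} if $N[u]\cap\widehat S=\{v\}$. The clean case is that $v$ has no private neighbour. Deleting $v_i$ from $S$ leaves a legal sequence $S^{-}$ of length $k-1$, and since $v$ has no private neighbour one checks that $\widehat S\setminus\{v\}$ fails to dominate exactly the one vertex $v$. Appending to $S^{-}$ any neighbour $u$ of $v$ — being non-private, $u$ has a neighbour $w\in\widehat S\setminus\{v\}$ — gives a legal sequence of length $k$, hence again maximum, with vertex set $(\widehat S\setminus\{v\})\cup\{u\}$ (the appended $u$ has footprint exactly $\{v\}$). Deleting $v$ isolates nobody, since $v$ had no neighbour in $\widehat S$, while $u$ is adjacent to $w$; so this maximum Grundy dominating set has strictly fewer isolated vertices than $\widehat S$, contradicting the choice of $S$.

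The remaining case — $v$ has a private neighbour $u$ — is the heart of the matter and the main obstacle. Here $v$ is the unique vertex of $\widehat S$ dominating $u$, so $u$ lies in the footprint of $v_i$, whence $\{v,u\}\subseteq N[v_i]-\bigcup_{j<i}N[v_j]$. The strategy is to show $S$ was not maximum after all: inserting $u$ immediately before $v_i$ makes $u$ footprint itself, and the only positions it can spoil are those vertices among $v_i,v_{i+1},\dots,v_k$ whose footprints happen to lie entirely inside $N[u]$. If no such footprint lies inside $N[u]$, the insertion yields a legal sequence of length $k+1$, a contradiction; otherwise one must instead delete $v_i$ together with the later vertices whose footprints lie in $N[u]$ and re-dominate the remainder using $u$ (plus whatever the leftover undominated set forces in), arranging for the new sequence to still have length $k$ and to be legal. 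Pushing this bookkeeping through — and, in the worst configuration where every neighbour of $v$ is private, so that $v$ is the only vertex of $\widehat S$ within distance $2$ of $v$, using that rigidity to locate a vertex outside the ball of radius $2$ around $v$ that genuinely lengthens the sequence — is where the real work lies.
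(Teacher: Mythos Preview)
Your proof is incomplete: the ``remaining case'' (where $v$ has a private neighbour) is left as a sketch, and the bookkeeping you allude to is never carried out. More to the point, your case split is what creates the difficulty. By splitting on ``$v$ has no private neighbour'' versus ``$v$ has some private neighbour'', your hard case lumps together two very different situations: (i) $v$ has at least one non-private neighbour as well, and (ii) \emph{every} neighbour of $v$ is private. Your insertion idea (put $u$ just before $v_i$) runs into exactly the trouble you describe in situation (i), because $v$ may then lose its footprint while later terms are unaffected, and you have no clean way to recover length $k$.

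The paper's split avoids this. It asks instead whether some vertex of $\widehat S$ lies at distance exactly~$2$ from $v$. If yes, take a common neighbour $u$ of $v$ and that vertex $x$; delete $v$ from $S$ and append $u$ at the end. Since $v$ is isolated in $G[\widehat S]$, $v$ is undominated after the deletion, so $u$ footprints $v$, and $u$ is adjacent to $x\in\widehat S\setminus\{v\}$. This works regardless of whether $v$ also has private neighbours --- you simply ignore them. If no vertex of $\widehat S$ is at distance~$2$, then \emph{every} neighbour of $v$ is private (this is precisely your ``worst configuration''). Now pick any vertex $u'$ at distance~$2$ from $v$ (one exists since $|\widehat S|\ge 2$ and all other members of $\widehat S$ are at distance $\ge 2$ from $v$); by hypothesis $u'\notin\widehat S$. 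Let $u$ be a common neighbour of $v$ and $u'$. Delete $v$ and append $u'$, then $u$: $u'$ footprints $u$ (whose only $\widehat S$-neighbour was $v$) and $u$ footprints $v$. This is a legal sequence of length $k+1$, contradicting maximality. So the second case cannot occur, and no delicate insertion/deletion accounting is needed.
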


\begin{proof}
We show that if $v$ is an isolated vertex in the induced subgraph of $\widehat{S}$ in $G$, then we can remove $v$ from $S$ and add another vertex to $S$, possibly at a different point in the sequence, so that the resulting sequence is legal and the new vertex is a neighbor of another vertex of $\widehat{S}$. 

To this end, we choose such a vertex $v$ and note that no vertex of $N(v)$ belongs to $\widehat{S}$. We consider two cases.

\begin{case}\label{case1}
Suppose there exists a vertex $x\in \widehat{S}$ at distance $2$ from $v$. 
\end{case}

Notice that $v$ must footprint itself. Let $u$ be a common neighbor of $v$ and $x$. We form the desired legal sequence $S'$ by removing $v$ from $S$ and adding $u$ as the last vertex in $S'$, so that $u$ footprints $v$.

\begin{case}\label{case2}
Suppose no vertex of $\widehat{S}$ is at distance $2$ from $v$. 
\end{case}

Since $\widehat{S}$ is a dominating set, any vertex $u'$ which is at distance $2$ from $v$, must be a neighbor of at least one vertex of $\widehat{S}$. Let $w$ be the footprinter of $u'$ and let $u$ be a common neighbor of $v$ and $u'$. Notice that $v$ must footprint $u$ since it is the only neighbor of $u$ which is in $\widehat{S}$. Form the legal sequence $S'$ by removing $v$ from $S$ and adding $u'$ as the penultimates vertex and $u$ as the ultimate vertex in $S'$. Notice that $u'$ footprints $u$ and $u$ footprints $v$ in $S'$. However, we now have $|\widehat{S}'|=|\widehat{S}|+1$ which is impossible, so this case cannot occur.

\end{proof}

Theorem \ref{totaldom} implies that every connected non-complete graph has a Grundy dominating set which is a total dominating set.

\section{Acknowledgements}

We would like to express our thanks to the anonymous referees who helped us find and correct the flaws in this paper, and improved its readability.

 \bibliographystyle{plain}
 
 \end{document}